\definecolor{DarkGolden}{HTML}{b7850b}
\definecolor{UTSAb}{HTML}{1364b0}
 \newcommand{\tbsi}[1]{{\bsifamily #1}}
 \newcommand{\tac}[1]{{\Fontauri\slshape#1}}
 \newcommand{\tdh}[1]{{\fontfamily{cmdh}\selectfont#1}}
\def\@cite#1#2{[{\color{RoyalBlue}\sffamily\bfseries#1}\if@tempswa , \tdh{#2} \fi]}
\def\@biblabel#1{[\textbf{#1}]}
\def\tagform@#1{\maketag@@@{\tdh{(\ignorespaces#1\unskip\@@italiccorr)}}}
\renewcommand{\eqref}[1]{\tdh{(\ref{#1})}}
\numberwithin{equation}{section}
\newtheoremstyle{RoyalBlue}{}{}{\itshape\color{RoyalBlue}}{}{\color{RoyalBlue}\sffamily\bfseries}{.}{ }{}
\newtheoremstyle{RoyalBlue_def}{}{}{\color{RoyalBlue!80!black}}{}{\color{RoyalBlue}\bfseries}{.}{ }{}
\theoremstyle{RoyalBlue}
\newtheorem{Thm}{Theorem}[section]
\newtheorem{Lem}{Lemma}[section]
\newtheorem{Prop}{Proposition}[section]
\theoremstyle{RoyalBlue_def}
\newtheorem{Rmk}{Remark}[section]
\def\ep{\varepsilon}
\def\f{\frac}
\def\grad{\nabla}
\newcommand{\abs}[1]{ \left| #1 \right|}
\newcommand{\norm}[1]{\lVert#1\rVert}
\newcommand{\inner}[2]{\left\langle #1, #2 \right\rangle}
\newcommand{\Rmnum}[1]{ \uppercase\expandafter{\romannumeral  #1}}
\newcommand{\mc}[1]{\mathcal{#1}}
\newcommand{\ubrace}[2]{\begingroup
      \color{RoyalBlue}
      \underbrace{\color{black}#1}_{#2}
      \endgroup
      }
\newcommand{\vl} {\boldsymbol{\ell}}
\newcommand{\vxi} {\boldsymbol{\xi}}
\newcommand{\vtheta} {\boldsymbol{\theta}}
\newcommand{\vchi} {\boldsymbol{\chi}}
\newcommand{\vu} {\mathbf{u}}
\newcommand{\vq} {\mathbf{q}}
\newcommand{\vp} {\mathbf{p}}
\newcommand{\vP} {\mathbf{P}}
\newcommand{\vQ} {\mathbf{Q}}
\newcommand{\vz} {\mathbf{z}}
\newcommand{\vo} {\mathbf{0}}
\newcommand{\vx} {\mathbf{x}}
\newcommand{\vphi} {\boldsymbol \varphi}
\def\dps{\displaystyle}
\renewcommand{\headrule}{{\color{gray}%
\hrule width\headwidth height\headrulewidth \vskip-\headrulewidth}}
\renewcommand{\headrulewidth}{.2pt}  
\renewcommand\@maketitle{%
\hfill
\begin{minipage}{\textwidth}
\vskip -1em
\let\footnote\thanks
\begin{center}
  {\LARGE \@title \par }
  \vskip 1em
  { \@author \par}
    { \@date \par}
\end{center}
\headrule
\end{minipage}
\par
}
\newcommand{\hoathib}{{\color{RoyalBlue}\ding{118}}}
\newcommand{\muitenb}{{\color{RoyalBlue}\ding{42}}}
\newcommand{\hoab}{{\color{RoyalBlue}\ding{93}}}
\title{\color{RoyalBlue}\tbsi{A Result of Uniqueness of Solutions of the  Shigesada-Kawasaki-Teramoto Equations} }
 \author[1]{\tac{Du Pham}\thanks{\color{JungleGreen}du.pham@utsa.edu}}
 \author[2]{\tac{Roger Temam}\thanks{\color{JungleGreen}temam@indiana.edu}}
\affil[1]{\footnotesize\tac{Department of Mathematics, University of Texas at San Antonio, One UTSA Circle, San Antonio, Texas 78249, U.S.A.} }
\affil[2]{\footnotesize\tac{The Institute for Scientific Computing and Applied Mathematics, Indiana University, 831 East Third Street, Rawles Hall, Bloomington, Indiana 47405, U.S.A.}}
 \date{\empty}
\newcommand*{\boxcolor}{orange}
\renewcommand{\boxed}[1]{\textcolor{\boxcolor}{%
\tikz[baseline={([yshift=-.6ex]current bounding box.center)}] \node [rectangle, minimum width=1ex,rounded corners,draw] {\normalcolor\m@th$\displaystyle#1$};}}
\begin{document}

\usefont{T1}{cmdh}{m}{n}

\maketitle
\thispagestyle{fancy}

\cfoot[]{\tac{\large\color{gray}Page \thepage\ of \pageref{LastPage}}}
\rhead{\color{gray}\footnotesize{\tac{Uniqueness \& Wellposedness for SKT systems}}}
\lhead{\color{gray}{\footnotesize\tac{D. Pham \& R. Temam }}}

\begin{abstract} \lettrine{\color{RoyalBlue}W}{e} derive the uniqueness of weak solutions to the  Shigesada-Kawasaki-Teramoto (SKT) systems using the adjoint problem argument. Combining with \cite{PT16} we then derive the well-posedness for the SKT systems  in space dimension $d\le 4$.\end{abstract}

  {\footnotesize
  \paragraph{Keywords and phrases:}    wellposedness;   quasi-linear parabolic equations; global existence.

  \paragraph{2010 Mathematics Subject Classification:}   35K59, 35B40, 92D25. 
  }

\tableofcontents
%
\section{Introduction\label{sec:intro}}
\lettrine{\color{RoyalBlue}U}{niqueness} is an important issue to address when one considers the global well-posedness for a system  of  differential equations. For systems of partial differential equations like cross diffusion systems, the uniqueness has remained a challenge for solutions with mild regularity since the comparison/maximum principle for the cross diffusion systems like SKT is not available. We also note here that in our recent work \cite{PT16}, we showed a weak maximum principle for non-negativeness of solutions that allowed us to prove the existence of positive weak solutions of SKT systems directly using finite difference approximations, a priori estimates and passage to the limit, which avoid the change of variables (or entropy function) being used in other works as in e.g. \cite{CJ04,CDJ16,Jun15}. Together with our existence result for weak solutions of SKT systems in \cite{PT16}, this article provides the well-posedness for these systems in space dimension $d\le 4$.

The available uniqueness results for the KST systems are rather scarce and require high regularity of the solutions. In \cite[Theorem 3.5]{Yag93}, the author proved a uniqueness result for solutions in $\mc{C}((0,T];H^2(\Omega))\cap\mc{C}^1((0,T];L^2(\Omega))$ using an abstract theory for parabolic equations in space dimension 2. In \cite{Ama89,Ama90}, the author proved global existence and uniqueness results for solutions of general systems of parabolic equations with high regularity in space in the semigroup settings, $W^{1,p}(\Omega)$ for $p>n$, which require H\"{o}lder a priori estimates when applied to SKT equations.

In this work, we use the argument of adjoint problems to build specific test functions to show the uniqueness for solutions in a more general space setting, in $L^\infty(0,T;H^1(\Omega)^2)$ with time derivatives in $L^\f43(\Omega_T)^2$ for space dimension $d\le 4$, see Remark \ref{rmk:soln reg} below. This argument of using adjoint problems has been used to show uniqueness results for scalar partial differential differential equations describing flows of gas or fluid  in porous media or the spread of a certain biological population, see e.g. \cite{Aro85,ACP82}. It is also systematically used in the context of linear equations in \cite{LM72}.

Throughout our work, we denote by  $\Omega$   an open bounded domain in $\mathbb{R}^d$, with $d\le  4$, and we set $\Omega_T=\Omega\times (0,T)$ for any $T>0$.  We aim to show the uniqueness result and then combine this result to our previous global existence results of weak solutions in \cite{PT16} to show the global wellposedness for the following SKT system of diffusion reaction equations, see \cite{SKT79}:
\begin{equation} \label{SKT vector}
 \begin{cases}
  &\dps \partial_t \vu - \Delta \vp(\vu)  + \vq(\vu) = \vl(u)\text{ in } \Omega_T,\\
  &\partial_\nu \vu = \vo\text{ on } \partial \Omega \times (0,T) \text{ or }\vu=\vo \text{ on }\partial \Omega\times (0,T), \\
  &\vu(x,0) = \vu_0(x) \ge \vo, \text{ in } \Omega,
    \end{cases}
\end{equation}
where $\vu =(u,v)$ and
\begin{subequations}
 \label{pi qi li}
 \begin{align}
&\vp(\vu) = \begin{pmatrix}
  p_1(u,v) \\ p_2(u,v)
\end{pmatrix} = \begin{pmatrix}
  (d_1 + a_{11} u+ a_{12}v) u \\ (d_2 + a_{21} u+ a_{22}v) v
\end{pmatrix} , \\
&\vq(\vu) = \begin{pmatrix}
  q_1(u,v) \\ q_2(u,v)
\end{pmatrix}
=\begin{pmatrix}
(b_1u+c_1v)u
\\ (b_2u+c_2v)v
\end{pmatrix},
\\& \text{and }
\vl(\vu) =\begin{pmatrix}
  \ell_1(u)\\ \ell_2(v)
\end{pmatrix}
=\begin{pmatrix}
  a_1u\\  a_2v
\end{pmatrix}.
 \end{align}
\end{subequations}
Here $a_{ij} \ge 0, b_i \ge 0, c_i \ge 0, a_i \ge 0, d_i \ge 0$ are such that
\begin{equation}\label{1.5c}
  0<a_{12} a_{21} < 64 a_{11}a_{22}. 
\end{equation}
It can be shown in \cite{Yag08} that the condition \eqref{1.5c} is equivalent to
\begin{equation} \label{coef cond}
  0<a_{12}^2 < 8 a_{11}a_{21} \text{ and } 0<a_{21}^2 <8a_{22}a_{12}, 
\end{equation}
as far as existence and uniqueness of solutions are concerned.

One of the difficulties with the SKT equations is that they are not parabolic equations. Whereas Amann \cite{Ama89,Ama90}  has proven the existence and uniqueness of regular solutions for general parabolic equations, which can be applied to SKT equations using $L^p$ estimates, we proved in \cite{PT16} the existence of weak solutions (see also \cite{Jun15}) to the SKT equations. It is important, to validate this concept of weak solutions, to show that the weak solutions are unique. This is precisely what we are doing in this article in dimension $d\le 4$.

Throughout the article, we often use the following alternate form of \eqref{SKT vector}:
\begin{equation}
  \label{SKE alternate vector}
   \partial_t \vu - \grad \cdot \Big( \vP(\vu) \grad \vu \Big) + \vq (\vu) =\vl (\vu),
\end{equation}
where
\begin{equation}
 \label{P}
 \vP(\vu) = \begin{pmatrix}
             p_{11}(u,v) & p_{12}(u,v)\\
             p_{21}(u,v) & p_{22}(u,v)
            \end{pmatrix}
            =\begin{pmatrix} d_1 +2a_{11}u + a_{12}v & a_{12} u \\ a_{21}v & d_2 + a_{21}u +a_{22}v \end{pmatrix}.
\end{equation}

When the condition \eqref{coef cond} is satisfied and $u\ge 0, v\ge 0$, we can prove that the matrix $\vP(\vu)$ is (pointwise)  positive definite and that:
\begin{equation} \label{P positive definite}
\left(\vP(\vu)\vxi \right) \cdot \vxi \ge \alpha(u+v)\abs{\vxi}^2 + d_0 \abs{\vxi}^2,\quad \forall \vxi \in \mathbb{R}^2,
\end{equation}
where $d_0 = \min (d_1,d_2)$ and
 \begin{equation}\label{alpha}
   0<\alpha < \min \left(a_{11}, a_{12},a_{21 },a_{22},\delta_0\right) ;
   \end{equation}
Here we refer the readers to a proof of \eqref{P positive definite} in our recent article \cite{PT16}.

We consider later on the mappings
\begin{equation}
  \label{P Q map} \mc{P}: \vu=(u,v) \mapsto \vp=(p_1,p_2), \quad \mc{Q}: \vu=(u,v) \mapsto \vq=(q_1,q_2),
\end{equation}
and we observe that
\begin{equation}
  \label{P Q jac} \vP(\vu) = \f{D\mc{P}}{D \vu}(\vu),\quad  \vQ(\vu) = \f{D\mc{Q}}{D \vu}(\vu),
\end{equation}
and
\begin{equation}
  \label{P grad u} \grad \vp(\vu) = \vP(\vu) \grad \vu.
\end{equation}
We see that the explicit form of $\vP(\vu)$ is given  in \eqref{P} and that of $\vQ(\vu)$ is
 \begin{equation}
   \label{Q}\vQ(\vu) = \begin{pmatrix}
    2b_1 u+c_1 v & c_1 u \\ b_2v & b_2u+2c_2v
  \end{pmatrix}.
 \end{equation}

Note that \eqref{P positive definite} implies that, for $u,v \ge 0$, $\vP(\vu)$ is invertible (as a $2\times 2$ matrix),
and that, pointwise (i.e. for a.e. $x\in \Omega$),
\begin{equation}\label{2.5b}
 \abs{\vP(\vu)^{-1}}_{\mathcal{L}(\mathbb{R}^2)} \le \f{1}{d_0+\alpha(u+v)}.
\end{equation}
Our work is organized as follows. We show our main result in Section \ref{sec:uniq}, where the uniqueness for weak solutions to the SKT system is derived using solutions of adjoint problems. Since the proof of the uniqueness relies on the existence of solutions to the adjoint problem, we show the existence for these problems in Section \ref{sec:adj}, together with the apriori estimates in dimension $d\le4$. We finally show in Section \ref{sec:global} that the newly derived uniqueness result in Section \ref{sec:uniq} together with our existence result in \cite{PT16} leads to the global well-posedness for the SKT systems in space dimension $d\le 4$.
\section{Uniqueness result for SKT systems\label{sec:uniq}}

\lettrine{\color{RoyalBlue}A}{s} mentioned earlier, our uniqueness result  is proven using an argument of an adjoint problem, see e.g. \cite{ACP82}; see also \cite{LM72} in the context of linear parabolic problems. The existence of solution of our adjoint problem will be granted if the solution $\vu$ of \eqref{SKT vector}  enjoys the following regularity properties
\begin{equation}
  \label{uniqueness cond}
  \vu \in L^\infty(0,T;H^1(\Omega)^2),  \text{ and }\partial_t \vu \in L^\f43(\Omega_T)^2.
\end{equation}
\begin{Rmk}Although this was not explicitly stated in \cite{PT16}, the solutions that we constructed in dimension $d\le 4$ belong to $L^\infty(0,T;H^1(\Omega)^2)$ with $\partial_t \vu \in L^2(0,T;L^2(\Omega)^2)$; see Appendix \ref{appen: C}.\label{rmk:soln reg}

\end{Rmk}

Introducing a test function $\vphi$ which satisfies \eqref{phi} below and the same boundary  condition as $\vu$, we multiply \eqref{SKT vector} by $\vphi$, integrate, integrate by parts and obtain the variational weak form of \eqref{SKT vector}:
\begin{equation}
  \label{ui eq}
  \begin{cases}
   &\dps \inner{\partial_t \vu}{\vphi} - \inner { \vp(\vu)}{\Delta \vphi}  + \inner{\vq(\vu)}{\vphi} = \inner{\vl(\vu)}{\vphi},\\
   &\partial_\nu \vu = \vo\text{ on } \partial \Omega \times (0,T) \text{ or }\vu=\vo \text{ on }\partial \Omega\times (0,T), \\
   &\vu(x,0) =  \vu_0, \text{ in } \Omega,
     \end{cases}
\end{equation}
for all  test functions $\vphi $ such that
\begin{equation}
 \label{phi}
 \begin{cases}
   &\vphi \in L^2(0,T;H^2(\Omega)^2) \cap L^\infty(0,T;H^1(\Omega)^2), \text{ and }\partial_t\vphi \in L^\f43(\Omega_T)^2, \\
   &\partial_\nu \vphi = \vo \text{ or } \vphi =\vo \text{ on }\partial \Omega \text{ ($\vphi$ satifies the same b.c. as $\vu$)}.
 \end{cases}
\end{equation}
Note that the boundary terms disappear because $\vp(\vu)$ satisfies the same b.c. as $\vu$.
To show that the solutions of \eqref{SKT vector} are unique, we introduce the difference of two solutions $\vu_1,\,\vu_2$ of \eqref{ui eq},  $\bar{\vu} = \vu_1 -\vu_2$, and we will eventually show that  $\bar{\vu}=\vo$ for a.e. $\vx\in \Omega$ and $t>0$.

 We first observe  that $\bar{\vu}$ satisfies
\begin{equation}
  \label{u diff eq}
  \begin{cases}
   &\dps \inner {\partial_t \bar{\vu}}{\vphi} - \inner{\vp(\vu_1)-\vp(\vu_2)}{\Delta \vphi}  + \inner{\vq(\vu_1)-\vq(\vu_2)}{\vphi} = \inner{\vl(\vu_1) -\vl(\vu_2)}{\vphi},\\
   &\partial_\nu \bar{\vu} = \vo\text{ on } \partial \Omega \times (0,T) \text{ or }\bar{\vu}=\vo \text{ on }\partial \Omega\times (0,T), \\
   &\bar{\vu}(x,0) =  \vo, \text{ in } \Omega,
     \end{cases}
\end{equation}
for any test function  $\vphi$ that satisfies \eqref{phi}.

Using the notations $\vP(\star),\,\vQ(\star)$ introduced earlier in \eqref{P Q jac} and the relations  \eqref{p diff}, \eqref{q diff} from Lemma \ref{lem: u diff}, we find
  \[\inner{\bar{\vu}}{\vphi}_t-\inner{\bar{\vu}}{\vphi_t}-\inner{\vP(\tilde{\vu})\bar{\vu}}{\Delta\vphi} +\inner{\vQ(\tilde{\vu})\bar{\vu}}{\vphi} = \inner{\vl(\bar{\vu})}{\vphi},\]
where $\tilde{\vu} = (\vu_1+\vu_2)/2$.

 Thus
\begin{equation}\label{skt inner}
  \inner{\bar{\vu}}{\vphi}_t-\inner{\bar{\vu}}{\vphi_t}-\inner{\bar{\vu}}{\vP(\tilde{\vu})^T\Delta\vphi} +\inner{\bar{\vu}}{\vQ(\tilde{\vu})^T\vphi} = \inner{\vl(\bar{\vu})}{\vphi}.
\end{equation}
We notice that $\tilde\vu\in L^\infty(0,T;H^1(\Omega)^2)$ because $\vu_1,\vu_2\in L^\infty(0,T;H^1(\Omega)^2)$.

We now consider the test function $ \vphi $ to be solution of the following backward adjoint problem
\begin{equation}
  \label{adj eq phi}
  \begin{cases}
    &-\partial_t\vphi - \vP(\tilde{\vu})^T  \Delta\vphi+ \vQ(\tilde{\vu})^T\vphi = \vphi \text{ in }\Omega_T,
    \\& \partial_\nu \vphi= \vo \text{ or } \vphi = \vo  \text{ on }\partial \Omega\times(0,T),
    \\& \vphi(T) =\vchi(\vx)\text{ in }\Omega,
  \end{cases}
\end{equation}
where $\vchi(\vx)= (\chi^u(\vx),\chi^v(\vx)) \in H^1(\Omega)^2$.

Before showing the uniqueness result of solutions of  \eqref{SKT vector} using  the test function $\vphi$ as a solution of  \eqref{adj eq phi}, we first show the existence of  $\vphi =(\phi^u,\phi^v)\in L^2(0,T;H^2(\Omega)^2)$  with $\partial_t \vphi \in L^\f43(\Omega_T)$ in the following section.

\subsection{Existence of solutions for the adjoint systems \label{sec:adj}}
\lettrine{\color{RoyalBlue}I}{n} this section, we continue to assume that $d\le4$ and we show the existence of a solution $\vphi$ of \eqref{adj eq phi} satisfying \eqref{phi} by building approximate systems where the classical existence theory can be applied to show the existence of approximate solutions. We suppose throughout this section that the functions $\tilde\vu \ge \vo$ in \eqref{adj eq phi}$_1$ satisfies
\begin{equation}\label{tilde u}
  \tilde\vu \in L^\infty(0,T;H^1(\Omega)^2).
\end{equation}

We observe that the diffusive matrix $\vP(\tilde\vu)$ in \eqref{adj eq phi} may not be uniformly parabolic\footnote{$\vP(\star)$ is uniformly parabolic if $ \kappa_1 \abs{\vxi}^2 \le \left(\vP(\star)\vxi \right) \cdot \vxi  \le \kappa_2 \abs{\vxi}^2,$ for some $\kappa_1,\kappa_2>0$; see the definition in e.g. \cite{Lad68}} unless $\tilde\vu \in L^\infty(\Omega_T)^2$. Thus, we can not directly apply the classical results for parabolic equations to show the existence of $\vphi$, see e.g. \cite[Theorem 5.1]{Lad68}. We therefore use an approximation approach as in \cite{ACP82}.


The existence of solution $\vphi$ of \eqref{adj eq phi} is obtained in three steps:
\begin{itemize}
  \item[\muitenb] Define approximations $\vphi_\ep$ of $\vphi$, which are solutions of the approximate systems \eqref{adj eq ep} below.
  \item[\muitenb] Derive a priori estimates for the functions $\vphi_\ep$.
  \item[\muitenb] Pass to the limit as $\ep \rightarrow 0$ to show the existence of $\vphi$, solution  of \eqref{adj eq phi}.
\end{itemize}
We start now with the first step of building approximate solutions $\vphi_\ep$:


\subsubsection{Approximate adjoint systems}  We know that  $\tilde\vu= (\vu_1+\vu_2)/2 \ge \vo $ and $\tilde\vu \in L^\infty(0,T;H^1(\Omega)^2)$ (see Theorem \ref{thm: existence}).  We build approximations $\tilde\vu_\ep$ of $\tilde\vu$, as a sequence in $L^\infty(\Omega_T)^2$, that converges to $\tilde\vu$ in $L^4(\Omega_T)^2$. We can define such $\tilde\vu_\ep$ as  follows
\begin{equation}
  \label{u ep}
  \tilde\vu_\ep = \vtheta_\ep(\tilde\vu),
\end{equation}
where $\vtheta_\ep $ is a smooth function with derivative bounded by a constant independent of $\ep$, which we assume to be $1$, such that
\begin{equation}
 \label{theta ep} \vtheta_\ep(\tilde{\vu} ) =\begin{cases}
   & \tilde{\vu} \text{ for }  \tilde{\vu} \le \f{1}{\ep},
 \\  & \f{1}{\ep} \text{ for } \tilde{\vu} \ge \f{2}{\ep}.
\end{cases}
\end{equation}
We easily see that $\vu_\ep \in L^\infty(\Omega_T)^2$. Furthermore, we have $\tilde\vu_\ep \rightarrow \tilde \vu$ a.e. and $\abs{\tilde\vu_\ep}_{L^4} \le \abs{\tilde\vu}_{L^4}<\infty$, and we obtain by the Lebesque dominated convergence that $\tilde\vu_\ep$ converges to $\tilde\vu$ in $L^4(\Omega_T)^2$. Finally, we easily see the following by straightforward calculations
\begin{equation}\label{u ep bounded by u}
 \norm{\tilde\vu_\ep}_{L^\infty(0,T;H^1(\Omega)^2)} \le \kappa   \norm{\tilde\vu}_{L^\infty(0,T;H^1(\Omega)^2)},
\end{equation}
where $\kappa$ depends on the maximum value of $\theta_\ep'$ which is  independent of $\ep$.

 We then let $\vphi_\ep = (\varphi^\ep_u, \varphi^\ep_v) $ satisfy the following approximate system
\begin{equation}
  \label{adj eq ep}
  \begin{cases}
    &-\partial_t\vphi_\ep - \vP(\tilde{\vu}_\ep)^T  \Delta\vphi_\ep+ \vQ(\tilde{\vu}_\ep)^T\vphi_\ep = \vphi_\ep \text{ in }\Omega_T,
    \\& \partial_\nu \vphi_\ep = \vo \text{ or } \vphi_\ep = \vo  \text{ on }\partial \Omega\times(0,T),
    \\& \vphi_\ep(T) =\vchi(\vx)\text{ in }\Omega.
  \end{cases}
\end{equation}
 We know that $\tilde\vu_\ep \in L^\infty(\Omega_T)^2$ which yields $\vP(\tilde\vu_\ep) \in L^\infty(\Omega_T)^4$, and this in turn implies that
\[\left(\vP(\tilde\vu_\ep)\vxi \right) \cdot \vxi  \le \kappa(\ep) \abs{\vxi}^2,\]
where $\kappa(\ep)$ is a constant depending on $\ep$. This bound from  above of $\vP(\tilde{\vu}_\ep)$ and its bound from the below in \eqref{P positive definite} give the uniform parabolic condition for the approximate system \eqref{adj eq phi}. The existence of a smooth function $\vphi_\ep$ is hence given by the classical theory of the equations of parabolic type, see e.g. \cite[Theorem 5.1]{Lad68}.
We now bound  the approximate solutions $\vphi_\ep$ independently of $\ep$:

\begin{Lem}\label{lem: phi bound} Assume that $d\le4$ and $\tilde\vu\in L^\infty (0,T;H^1(\Omega)^2)$.
  We then have the following a priori bounds independent of $\ep$ for the solution $\vphi_\ep$ of \eqref{adj eq ep}:
\begin{subequations}
  \begin{align}
      & \label{phi bound} \sup_{t\in [0,T]} \norm{\vphi_\ep}_{H^1(\Omega)^2} \le \kappa \norm{\vchi}_{H^1(\Omega)^2}, \\
      & \int_0^T (1+\tilde u_\ep+\tilde v_\ep) \abs{\Delta \vphi_\ep }^2 dt \le \kappa \norm{\vchi}_{H^1(\Omega)^2}, \label{phi bound1}
    \end{align}
and 
\begin{equation}\label{time der bound}
      \norm{\partial_t \vphi_\ep}_{  L^\f43(\Omega_T)} \le \kappa \norm{\vchi}.
\end{equation}
\end{subequations}
Here, in this lemma, $\kappa$  depends on $\norm{\vu}_{ L^\infty (0,T;H^1(\Omega)^2)}$ and on the coefficients but is independent of $\ep$.
\end{Lem}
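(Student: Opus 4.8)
The plan is to derive the three a priori bounds by testing the approximate adjoint system against well-chosen multipliers, exploiting the positive-definiteness \eqref{P positive definite} of the diffusion matrix to control the second-order terms. Since the system \eqref{adj eq ep} is backward in time, all energy identities will be integrated on a slice $(t,T)$ and the terminal data $\vchi$ will supply the boundary contribution at $T$; I will work throughout with the time-reversed variable so that the evolution is forward and standard parabolic energy estimates apply.

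\textbf{Step 1 (the $H^1$ bound \eqref{phi bound}).} First I would test \eqref{adj eq ep} with $-\Delta\vphi_\ep$ (admissible since $\vphi_\ep$ is smooth and satisfies the correct boundary condition, so the integration by parts produces no boundary terms). The term $\inner{-\p_t\vphi_\ep}{-\Delta\vphi_\ep}$ integrates by parts in space to $\f12\f{d}{dt}\norm{\grad\vphi_\ep}_{L^2}^2$, while the principal term $\inner{-\vP(\tilde\vu_\ep)^T\Delta\vphi_\ep}{-\Delta\vphi_\ep}$ gives, after transposing, the coercive quantity $\int\bigl(\vP(\tilde\vu_\ep)\Delta\vphi_\ep\bigr)\cdot\Delta\vphi_\ep$, bounded below by $\int\bigl(d_0+\alpha(\tilde u_\ep+\tilde v_\ep)\bigr)\abs{\Delta\vphi_\ep}^2$ using \eqref{P positive definite}. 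This single coercive term simultaneously yields the $H^1$ control after integration and the weighted bound \eqref{phi bound1}. The reaction-type terms $\inner{\vQ(\tilde\vu_\ep)^T\vphi_\ep}{-\Delta\vphi_\ep}$ and $\inner{\vphi_\ep}{-\Delta\vphi_\ep}$ must be absorbed: the latter is $\norm{\grad\vphi_\ep}_{L^2}^2$ (harmless, handled by Gr\"onwall), but the former is the delicate one, because the entries of $\vQ(\tilde\vu_\ep)$ grow linearly in $\tilde\vu_\ep$ and are not uniformly bounded.

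\textbf{Step 2 (controlling the $\vQ$ term, the main obstacle).} The hard part will be bounding $\int\bigl(\vQ(\tilde\vu_\ep)^T\vphi_\ep\bigr)\cdot\Delta\vphi_\ep$ by a constant depending only on $\norm{\tilde\vu}_{L^\infty(0,T;H^1)}$, independently of $\ep$, and critically on the space dimension $d\le4$. Schematically this integral is controlled by $\int\abs{\tilde\vu_\ep}\,\abs{\vphi_\ep}\,\abs{\Delta\vphi_\ep}$; by Cauchy--Schwarz I would split off a factor $\sqrt{1+\tilde u_\ep+\tilde v_\ep}\,\abs{\Delta\vphi_\ep}$ to be absorbed into the coercive left-hand side, leaving a factor like $\f{\abs{\tilde\vu_\ep}}{\sqrt{1+\tilde u_\ep+\tilde v_\ep}}\,\abs{\vphi_\ep}\lesssim\sqrt{\abs{\tilde\vu_\ep}}\,\abs{\vphi_\ep}$ in $L^2(\Omega)$. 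Estimating $\int\abs{\tilde\vu_\ep}\,\abs{\vphi_\ep}^2$ then requires a H\"older/Sobolev interpolation: with $\tilde\vu_\ep\in H^1(\Omega)\hookrightarrow L^{2d/(d-2)}(\Omega)$ and $\vphi_\ep\in H^1$, the triple product closes precisely when $d\le 4$, which is where the dimensional restriction enters. I would use the Gagliardo--Nirenberg inequality to interpolate $\norm{\vphi_\ep}$ between $L^2$ and $H^1$, arranging the exponent on $\norm{\grad\vphi_\ep}_{L^2}$ to stay below $2$ so that Young's inequality absorbs it into the coercive term and leaves a superlinear-but-integrable Gr\"onwall quantity; the uniform-in-$\ep$ bound \eqref{u ep bounded by u} keeps all constants independent of $\ep$. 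Applying Gr\"onwall's inequality backward from $t=T$ with data $\norm{\vchi}_{H^1}$ then gives \eqref{phi bound}, and re-integrating the coercive term gives \eqref{phi bound1}.

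\textbf{Step 3 (the time-derivative bound \eqref{time der bound}).} Finally I would read $\p_t\vphi_\ep$ directly from the equation: $\p_t\vphi_\ep=-\vP(\tilde\vu_\ep)^T\Delta\vphi_\ep+\vQ(\tilde\vu_\ep)^T\vphi_\ep-\vphi_\ep$, and estimate each term in $L^{4/3}(\Omega_T)$. The $L^{4/3}$ (rather than $L^2$) target is dictated by the first term: $\vP(\tilde\vu_\ep)$ grows like $\tilde\vu_\ep\in L^\infty(0,T;L^{2d/(d-2)})$, so $\vP(\tilde\vu_\ep)^T\Delta\vphi_\ep$ is a product of an $L^2(\Omega_T)$-weighted Laplacian (from \eqref{phi bound1}) with an $L^\infty_tL^{\cdot}_x$ factor; by H\"older in both space and time the product lands in $L^{4/3}(\Omega_T)$ for $d\le4$, with norm controlled by \eqref{phi bound} and \eqref{phi bound1}. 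The $\vQ$ term is handled identically and the zeroth-order term $\vphi_\ep$ is bounded by \eqref{phi bound}. Collecting these yields \eqref{time der bound} with $\kappa$ depending only on $\norm{\vu}_{L^\infty(0,T;H^1)}$ and the coefficients, completing the lemma.
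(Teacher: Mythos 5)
Your overall strategy is the same as the paper's: multiplier energy estimates on \eqref{adj eq ep}, coercivity of $\vP$ via \eqref{P positive definite} to produce the weighted term $(d_0+\alpha(\tilde u_\ep+\tilde v_\ep))\abs{\Delta\vphi_\ep}^2$, H\"older plus the Sobolev embedding $H^1\hookrightarrow L^4$ (this is exactly where $d\le 4$ enters, as you say), a backward Gr\"onwall argument from the terminal data $\vchi$, and finally reading $\partial_t\vphi_\ep$ off the equation and bounding $\vP(\tilde\vu_\ep)^T\Delta\vphi_\ep$ in $L^{\f43}(\Omega_T)$ by pairing against $\vz\in L^4(\Omega_T)$. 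Your treatment of the $\vQ$ term (splitting off $\sqrt{1+\tilde u_\ep+\tilde v_\ep}\,\abs{\Delta\vphi_\ep}$ and absorbing it into the weighted coercive term) is a legitimate variant of the paper's more direct H\"older $(4,4,2)$ plus Young absorption into $\f{d_0}{2}\abs{\Delta\vphi_\ep}^2_{L^2}$; the Gagliardo--Nirenberg/superlinearity discussion is unnecessary, since the resulting bound $\kappa\norm{\vphi_\ep}_{H^1}^2$ is already linear in the Gr\"onwall quantity.

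There is, however, one genuine gap in Step 1: you test \emph{only} with $-\Delta\vphi_\ep$. That identity controls the evolution of $\abs{\grad\vphi_\ep}_{L^2}^2$ alone, while every right-hand side bound you then derive (the $\vQ$ term, the zeroth-order terms) involves the \emph{full} norm $\norm{\vphi_\ep}_{H^1}^2=\abs{\vphi_\ep}_{L^2}^2+\abs{\grad\vphi_\ep}_{L^2}^2$. With only $\abs{\grad\vphi_\ep}^2$ on the left, Gr\"onwall does not close: nothing controls $\abs{\vphi_\ep}_{L^2}$. Under Dirichlet boundary conditions Poincar\'e rescues you, but the lemma and system \eqref{adj eq ep} equally allow the Neumann condition $\partial_\nu\vphi_\ep=\vo$, where constants defeat Poincar\'e and the argument as written fails. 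The fix is precisely what the paper does: multiply \eqref{adj eq ep} by $\vphi_\ep$ as well (giving the paper's identity \eqref{phi 1} for $-\f12\f{d}{dt}\abs{\vphi_\ep}^2$), add it to the $-\Delta\vphi_\ep$ identity \eqref{phi 2}, and run Gr\"onwall on the combined quantity $\abs{\vphi_\ep}^2+\abs{\grad\vphi_\ep}^2$; this also furnishes the $L^2$ part of \eqref{phi bound} that your version never produces.
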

\paragraph{Proof of Lemma \ref{lem: phi bound}:}
    Multiplying \eqref{adj eq ep} by $ \vphi_\ep$, we find
      \begin{equation}\label{phi 1}
        -\f12 \f{d}{dt}\abs{ \vphi_\ep}^2 - \inner{\vP(\tilde{\vu}_\ep)^T\Delta \vphi_\ep}{ \vphi_\ep} +  \inner{\vQ(\tilde{\vu}_\ep)^T\vphi_\ep}{\vphi_\ep} =\abs{ \vphi_\ep}^2.
      \end{equation}
  Multiplying \eqref{adj eq ep} by $-\Delta \vphi_\ep$, we also find after integration by parts
\begin{equation}\label{phi 2}
  -\f12 \f{d}{dt}\abs{\grad \vphi_\ep}^2 + \inner{\vP(\tilde{\vu}_\ep)^T\Delta \vphi_\ep}{\Delta \vphi_\ep} -  \inner{\vQ(\tilde{\vu}_\ep)^T\vphi_\ep}{\Delta \vphi_\ep} =\abs{\grad \vphi_\ep}^2
\end{equation}
Adding equations \eqref{phi 1} and \eqref{phi 2} and regrouping the terms, we find
\begin{multline} \label{phi 3}
  -\f12\f{d}{dt}\left( \abs{\vphi_\ep}^2+\abs{\grad \vphi_\ep}^2\right) + \inner{\vP(\tilde{\vu}_\ep)^T\Delta \vphi_\ep}{\Delta \vphi_\ep }  \\= \inner{\Big(\vP(\tilde{\vu}_\ep) + \vQ(\tilde\vu_\ep)^T\Big)\vphi_\ep}{\Delta \vphi_\ep } - \inner{\vQ(\tilde{\vu}_\ep)^T\vphi_\ep}{ \vphi_\ep}+ \abs{\vphi_\ep}^2+\abs{\grad \vphi_\ep}^2.
\end{multline}
We bound the first two terms on the right hand side of \eqref{phi 3} as follows:
\begin{itemize}
  \item[\hoathib] We first bound the easier term $\inner{\vQ(\tilde{\vu}_\ep)^T\vphi_\ep}{ \vphi_\ep}$ using the H\"{o}lder inequality for three functions with powers $(2,4,4)$, the Sobolev embedding from $H^1$ to $L^4$ in dimension $d\le 4$, and \eqref{u ep bounded by u}:
  \begin{align*}
 &    \abs{\inner{\vQ(\tilde{\vu}_\ep)^T\vphi_\ep}{ \vphi_\ep}} \le c_0\abs{\tilde\vu_\ep}_{L^2} \abs{\vphi_\ep}_{L^4}^2 \le c_1 \norm{\tilde\vu_\ep}_{H^1} \norm{ \vphi_\ep}_{H^1}^2 \le \kappa_1\left( \norm{\tilde\vu}_{L^\infty(0,T;H^1)}\right) \norm{ \vphi_\ep}_{H^1}^2  .
  \end{align*}
Here $ \kappa_1\left( \norm{\tilde\vu}_{L^\infty(0,T;H^1)}\right) $ is a constant which depends on $ \norm{\tilde\vu}_{L^\infty(0,T;H^1)} $ but not on $\ep$.
  \item[\hoathib] We now bound the term $\inner{\Big(\vP(\tilde{\vu}_\ep) + \vQ(\tilde\vu_\ep)^T\Big)\vphi_\ep}{\Delta \vphi_\ep } $ using H\"{o}lder's inequality for three functions with powers $(4,4,2)$, the previously used  Sobolev embedding from $H^1$ to $L^4$ which assumes $d\le 4$, the Young inequality, and  \eqref{u ep bounded by u}:
  \begin{multline*}
     \abs{\inner{\Big(\vP(\tilde{\vu}_\ep) + \vQ(\tilde\vu_\ep)^T\Big)\vphi_\ep}{\Delta \vphi_\ep }} \le c_0 \abs{\tilde\vu_\ep}_{L^4} \abs{\vphi_\ep}_{L^4} \abs{\Delta\vphi_\ep}_{L^2}
    \\  \le c_1 \norm{\tilde\vu_\ep}_{H^1} \norm{\vphi_\ep}_{H^1} \abs{\Delta\vphi_\ep}_{L^2} \hspace{4cm}
    \\  \le c_1 \norm{\tilde\vu_\ep}_{L^\infty(0,T;H^1)} \norm{\vphi_\ep}_{H^1} \abs{\Delta\vphi_\ep}_{L^2}
    \\  \le \kappa_2 \left(\norm{\tilde\vu}_{L^\infty(0,T;H^1)}\right)\norm{\vphi_\ep}_{H^1}^2 +  \f{d_0}{2}\abs{\Delta\vphi_\ep}_{L^2} ^2,
  \end{multline*}
\end{itemize}
where $d_0=\min(d_1,d_2)$  and $ \kappa_2\left( \norm{\tilde\vu}_{L^\infty(0,T;H^1)}\right) $ is independent of $\ep$.

Using these two bounds in \eqref{phi 3}, we find
\begin{multline*}
     -\f12\f{d}{dt}\left( \abs{\vphi_\ep}^2+\abs{\grad \vphi_\ep}^2\right) + \inner{\vP(\tilde{\vu}_\ep)^T\Delta \vphi_\ep}{\Delta \vphi_\ep }  \\\le  \kappa \left(\norm{\tilde\vu}_{L^\infty(0,T;H^1)}\right) \left( \abs{\vphi_\ep}^2+\abs{\grad \vphi_\ep}^2\right) +  \f{d_0}{2}\abs{\Delta\vphi_\ep}_{L^2} ^2,
\end{multline*}
where  $ \kappa\left( \norm{\tilde\vu}_{L^\infty(0,T;H^1)}\right) $ is a constant which depends on $ \norm{\tilde\vu}_{L^\infty(0,T;H^1)} $ but is independent of $\ep$.

Thanks to the positivity of $\vP(\star)$ in \eqref{P}, we  have using \eqref{P positive definite}
\begin{multline}
 \label{phi 4}
      -\f12\f{d}{dt}\left( \abs{\vphi_\ep}^2+\abs{\grad \vphi_\ep}^2\right) + \left(\f{d_0}{2} +\alpha(\tilde{u}_\ep+\tilde v_\ep)\right) \abs{\Delta \vphi_\ep}^2
      \\ \le  \kappa \left(\norm{\tilde\vu}_{L^\infty(0,T;H^1)}\right) \left( \abs{\vphi_\ep}^2+\abs{\grad \vphi_\ep}^2\right)  .
\end{multline}
This implies
\begin{equation}
  \label{3.10b}
    -\f{d}{dt}\left( \abs{\vphi_\ep}^2+\abs{\grad \vphi_\ep}^2\right) + \alpha(1+\tilde u_\ep +\tilde v_\ep)\abs{\Delta \vphi_\ep}^2  \le \kappa \left( \abs{\vphi_\ep}^2+\abs{\grad \vphi_\ep}^2 \right),
\end{equation}
where again $\kappa = \kappa \left(\norm{\tilde\vu}_{L^\infty(0,T;H^1)}\right)$.

Recall that $\vphi(T) = \vchi \in H^1(\Omega)^2$. By multiplying \eqref{3.10b} by $e^{2t}$ and integrating over $[t,T]$ for $t\in [0,T]$, we infer \eqref{phi bound} and \eqref{phi bound1}.

Now, to derive the bound  independent of $\ep$ for $\partial_t\vphi_\ep$, we  write using \eqref{adj eq ep}$_1$:
\begin{equation}\label{eq}
 \norm{\partial_t\vphi_\ep }_{L^\f43} = \norm{\vP(\tilde{\vu}_\ep)^T\Delta \vphi_\ep - \vQ(\tilde{\vu}_\ep)^T\vphi_\ep +\vphi_\ep }_{L^\f43}.
\end{equation}
We bound the most challenging norm term $\norm{\vP(\tilde{\vu}_\ep)^T\Delta \vphi_\ep}_{L^\f43} $ on the right hand side of \eqref{eq}. We consider a function $\vz \in L^4(\Omega_T)^2$ and write
\begin{multline*}
\int_{\Omega_T}\vP(\tilde{\vu}_\ep)^T\Delta \vphi_\ep \vz\,dxdt \\
 \le \left( \max_{i=1,2}d_i \abs{\Omega}^\f14 + 2\max_{i,j=1,2}a_{ij} \Big(\norm{u_\ep}_{L^4}+\norm{v_\ep}_{L^4}\Big)\right) \left(\norm{\Delta \varphi^u_\ep}_{L^2} + \norm{\Delta \varphi^v_\ep}_{L^2}\right) \norm{\vz}_{L^4}.
\end{multline*}
Observing that $\norm{u_\ep}_{L^4} \le  \norm{u}_{L^4} \le \norm{u}_{L^\infty(0,T;H^1)}$, a similar bound for $\norm{v_\ep}_{L^4}$,  and using \eqref{phi bound1}, we find the following bound for any $\vz \in L^4(\Omega_T)$
\[\int_{\Omega_T}\vP(\tilde{\vu}_\ep)^T\Delta \vphi_\ep \vz\,dxdt \le \kappa(\norm{\vu}_{L^4}) \abs{\Delta \vphi_\ep}\norm{\vz}_{L^4}\le \kappa(\norm{\vu}_{L^\infty(0,T;H^1)}) \norm{\vchi}\norm{\vz}_{L^4},\]
where $\kappa$  depends on $\norm{\vu}_{L^\infty(0,T;H^1)}$ and on the coefficients $d_i, a_{ij}$ but is independent of $\ep.$ This gives the a priori bound \eqref{time der bound}.
\hfill \qed
\subsubsection{Passage to the limit for the solutions of the approximate systems}

We now pass to the limit as $\ep \rightarrow 0$ in the approximate adjoint system \eqref{adj eq ep}. We have from the a priori estimates \eqref{phi bound} -- \eqref{time der bound} that there exist a subsequence of $\vphi_\ep$, still denoted by $\vphi_\ep$, such that as $\ep \rightarrow 0$
\begin{subequations}
 \label{phi conv}
  \begin{align}
   &\text{\hoab}\quad\vphi_\ep \rightharpoonup\vphi \text{ in }L^\infty(0,T;H^1(\Omega)^2) \text{ weak-star},\\
 \label{phi convb}&\text{\hoab}\quad\Delta\vphi_\ep \rightharpoonup  \Delta \vphi\text{ in }L^2(\Omega_T)^2 \text{ weakly},\\
   \label{phi conv}  &\text{\hoab}\quad\partial_t \vphi_\ep \rightharpoonup\partial_t\vphi\text{ in }L^\f43(\Omega_T)^2 \text{ weakly}.
  \end{align}
\end{subequations}
We then pass to the limit term by term in  \eqref{adj eq ep}, where the most challenging product term $\vP(\tilde\vu_\ep)\Delta\vphi_\ep$ is treated as follows: for any $\vz\in L^4(\Omega_T)^2$, we write
\begin{multline}
 \int_{\Omega_T}\left(\vP(\tilde\vphi_\ep)^T\Delta\vphi_\ep - \vP(\tilde\vphi)^T\Delta\vphi\right)\vz \,dxdt \\
 = \ubrace{\int_{\Omega_T}\Delta\vphi_\ep\left(\vP(\tilde\vu_\ep) - \vP(\tilde\vu)\right)\vz \,dxdt}{T_\ep^1} + \ubrace{\int_{\Omega_T}\left(\Delta\vphi_\ep - \Delta\vphi\right)\vP(\tilde\vu)\vz \,dxdt }{T_\ep^2}.
\end{multline}
\begin{enumerate}[\hoab\;]
 \item We first deal with the easier term $T_\ep^2$. We easily see that $T_\ep^2 \rightarrow 0$ as $\ep\rightarrow0$ thanks to \eqref{phi convb} and the facts that $\vP(\tilde\vu)\in L^4(\Omega_T)^4,\vz\in L^4(\Omega_T)^2$ which gives  $\vP(\tilde\vu)\vz\in L^2(\Omega_T)^2$.
 \item For $T_\ep^1$, as $\ep \rightarrow 0$, we know that $\tilde\vu_\ep \rightarrow \tilde\vu$ in $L^4(\Omega_T)^2$ strongly which gives $\vP(\tilde\vu_\ep) \rightarrow  \vP(\tilde\vu)$ in $L^4(\Omega_T)^4$ strongly. Thus $\left(\vP(\tilde\vu_\ep) - \vP(\tilde\vu)\right)\vz$ converges to $\vo$ in $L^2(\Omega_T)^2$ strongly. Since $\Delta\vphi_\ep$ is bounded in $L^2(\Omega_T)^2$ (thanks to \eqref{phi bound}$_2$), we conclude that $T_\ep^1\rightarrow 0$ as $\ep\rightarrow 0$.
\end{enumerate}
We hence have $\vP(\tilde\vu_\ep)\Delta\vphi_\ep \rightharpoonup\vP(\tilde\vu)\Delta\vphi$ weakly in $L^\f43(\Omega_T)^2$ as $\ep\rightarrow 0$. 
We thus conclude that $\vphi$ is a weak solution of \eqref{adj eq phi} as below
\begin{Prop} \label{thm: phi exist} Under the assumptions that $d\le 4$ and $\tilde\vu \in L^\infty(0,T;H^1(\Omega)^2)$, the adjoint system \eqref{adj eq phi} admits a  solution $\vphi$ in $L^\infty(0,T;H^1(\Omega)) \cap L^2(0,T;H^2(\Omega)^2)$  such that $\partial_t \vphi \in L^\f43(\Omega_T)^2$.
\end{Prop}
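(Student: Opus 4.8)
The plan is to realize $\vphi$ as a limit of the approximate solutions $\vphi_\ep$ built above, following exactly the three-step scheme already set up: produce $\vphi_\ep$ by classical parabolic theory, bound them uniformly in $\ep$, and extract a limit. The point of the regularization is that $\tilde\vu_\ep\in L^\infty(\Omega_T)^2$ makes $\vP(\tilde\vu_\ep)$ bounded above while the lower bound \eqref{P positive definite} persists, so the approximate system \eqref{adj eq ep} is genuinely (if not uniformly in $\ep$) parabolic and \cite[Theorem 5.1]{Lad68} applies; this first step is therefore already in place.

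Next I would invoke Lemma \ref{lem: phi bound} to record the $\ep$-independent bounds \eqref{phi bound}--\eqref{time der bound}. These give $\vphi_\ep$ bounded in $L^\infty(0,T;H^1(\Omega)^2)$, $\Delta\vphi_\ep$ bounded in $L^2(\Omega_T)^2$ (hence, with the boundary condition and elliptic regularity, $\vphi_\ep$ bounded in $L^2(0,T;H^2(\Omega)^2)$), and $\partial_t\vphi_\ep$ bounded in $L^\f43(\Omega_T)^2$. By weak-star and weak compactness I then extract a subsequence realizing the three convergences \eqref{phi conv}, whose limit $\vphi$ is the candidate solution with precisely the claimed regularity, the bounds surviving by weak lower semicontinuity of the norms.

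The delicate step is passing to the limit in the only genuinely nonlinear term, $\vP(\tilde\vu_\ep)^T\Delta\vphi_\ep$, in which one factor converges only weakly. Here the mechanism is the strong--weak pairing already exhibited: since $\tilde\vu_\ep\to\tilde\vu$ strongly in $L^4(\Omega_T)^2$ we get $\vP(\tilde\vu_\ep)\to\vP(\tilde\vu)$ strongly in $L^4(\Omega_T)^4$, and testing against any $\vz\in L^4(\Omega_T)^2$ and splitting into $T_\ep^1+T_\ep^2$, the term $T_\ep^2$ vanishes by \eqref{phi convb} (as $\vP(\tilde\vu)\vz\in L^2$) while $T_\ep^1$ vanishes because $(\vP(\tilde\vu_\ep)-\vP(\tilde\vu))\vz\to\vo$ strongly in $L^2$ against the $L^2$-bounded $\Delta\vphi_\ep$. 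This gives $\vP(\tilde\vu_\ep)^T\Delta\vphi_\ep\rightharpoonup\vP(\tilde\vu)^T\Delta\vphi$ weakly in $L^\f43(\Omega_T)^2$; the remaining lower-order and linear terms pass to the limit immediately, so $\vphi$ solves \eqref{adj eq phi}$_1$.

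What still requires care, and is the main obstacle I anticipate, is transmitting the terminal condition $\vphi(T)=\vchi$ to the limit, since weak convergence alone does not control time traces. For this I would upgrade the convergence using an Aubin--Lions--Simon compactness argument: the bound on $\vphi_\ep$ in $L^\infty(0,T;H^1(\Omega)^2)$ together with $\partial_t\vphi_\ep$ in $L^\f43(\Omega_T)^2$ yields strong convergence of $\vphi_\ep$ in $\mc{C}([0,T];L^2(\Omega)^2)$, so that $\vphi_\ep(T)=\vchi$ passes to $\vphi(T)=\vchi$; the boundary condition is inherited from the weak $L^2(0,T;H^2)$ convergence. Collecting these facts establishes that $\vphi$ is the desired solution of \eqref{adj eq phi} with the stated regularity.
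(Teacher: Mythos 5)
Your proposal follows essentially the same route as the paper's proof: the same regularized data $\tilde\vu_\ep$, the same invocation of classical parabolic theory for \eqref{adj eq ep}, the same $\ep$-independent bounds from Lemma \ref{lem: phi bound}, the same weak/weak-star compactness extraction \eqref{phi conv}, and the identical $T_\ep^1+T_\ep^2$ splitting to pass to the limit in $\vP(\tilde\vu_\ep)^T\Delta\vphi_\ep$. Your added Aubin--Lions--Simon argument to transmit the terminal condition $\vphi(T)=\vchi$ to the limit (a point the paper leaves implicit) and your explicit use of elliptic regularity to upgrade the $\Delta\vphi_\ep$ bound to an $L^2(0,T;H^2(\Omega)^2)$ bound are sound refinements, not departures from the paper's argument.
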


We now resume the work of showing the uniqueness of solution $\vu$ of \eqref{SKT vector}:
\subsection{Uniqueness result for the SKT system} We recall  that $\vu_1,\,\vu_2$ are two solutions  of \eqref{ui eq}, and we have written $\bar{\vu} = \vu_1 -\vu_2$; we will eventually show that  $\bar{\vu}=\vo$ for a.e. $\vx\in \Omega$ and $t>0$.
 We also recall that that $\bar{\vu}$ satisfies
  \eqref{u diff eq}.

Using the existence result of the adjoint problem in Theorem \ref{thm: phi exist}, we have  a solution $\vphi =(\phi^u,\phi^v)\in L^2(0,T;H^2(\Omega)^2)$ of \eqref{adj eq phi} with $\partial_t \vphi \in L^\f43(\Omega_T)$.

We henceforth infer from \eqref{skt inner} and \eqref{adj eq phi} that
\begin{equation}\label{eq u bar phi}
  \inner{\bar{\vu}}{\vphi}_t+\inner{\bar{\vu}}{\vphi} = \inner{\vl(\bar{\vu})}{\vphi}.
\end{equation}
We look at the first component in
 \eqref{eq u bar phi}:
 \[\inner{u_1(t)-u_2(t)}{\varphi^u}_t = (a_1-1) \inner{u_1(t)-u_2(t))}{\varphi^u} .\]
Multiplying the equation by $e^{-(a_1-1)t}$ and integrating over the time interval $[0,T]$, we find
 \[\inner{u_1(T)-u_2(T)}{\chi^u} = 0.\]
 This is true for any $\chi^u\in H^1(\Omega)$ and we thus find $u_1(T)=u_2(T)$ for a.e $\vx\in \Omega$. The argument is also valid for any other time $t<T$ which gives $u_1(t)=u_2(t)$ a.e..

 Similarly, we have $v_1(t) = v_2(t)$ a.e..

We  have thus shown the following result:
 \begin{Thm}[Uniqueness] In space dimension $d\le 4$, the SKT system  \eqref{SKT vector} admits at most one weak solution $\vu\ge \vo$ such that
\label{thm:uniq}     \[\vu\in L^\infty(0,T;H^1(\Omega)^2), \text{ and }\partial_t \vu \in L^\f43(\Omega_T)^2.\]

 \end{Thm}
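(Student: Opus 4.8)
The plan is to prove uniqueness by exploiting the adjoint problem as a device for extracting test functions that collapse the weak formulation of the difference equation \eqref{u diff eq} into something trivial. The starting point is the identity \eqref{skt inner}, which I would derive by writing the nonlinear differences $\vp(\vu_1)-\vp(\vu_2)$ and $\vq(\vu_1)-\vq(\vu_2)$ as $\vP(\tilde\vu)\bar\vu$ and $\vQ(\tilde\vu)\bar\vu$ respectively, using the mean-value/Jacobian relations of Lemma \ref{lem: u diff} with $\tilde\vu=(\vu_1+\vu_2)/2$; transposing the matrices onto the test function then produces the adjoint-type pairings $\inner{\bar\vu}{\vP(\tilde\vu)^T\Delta\vphi}$ and $\inner{\bar\vu}{\vQ(\tilde\vu)^T\vphi}$.

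Next I would take $\vphi$ to be the solution of the backward adjoint system \eqref{adj eq phi}, whose existence in the required class ($L^\infty(0,T;H^1)\cap L^2(0,T;H^2)$ with $\partial_t\vphi\in L^{4/3}$) is exactly Proposition \ref{thm: phi exist}. The point of the adjoint construction is that the left-hand operator of \eqref{adj eq phi}, namely $-\partial_t\vphi-\vP(\tilde\vu)^T\Delta\vphi+\vQ(\tilde\vu)^T\vphi$, is engineered to cancel the three matrix-weighted terms in \eqref{skt inner} against each other, leaving only $\inner{\bar\vu}{\vphi}$. Substituting and accounting for the time-derivative pairing $\inner{\bar\vu}{\vphi}_t$, this reduces the whole identity to the clean relation \eqref{eq u bar phi}, an ordinary differential equation in $t$ for each scalar pairing $\inner{u_1(t)-u_2(t)}{\chi}$ driven only by the linear reaction coefficient $a_1$ (respectively $a_2$).

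From \eqref{eq u bar phi} the endgame is elementary: for the first component the relation reads $\frac{d}{dt}\inner{u_1-u_2}{\varphi^u}=(a_1-1)\inner{u_1-u_2}{\varphi^u}$, so multiplying by the integrating factor $e^{-(a_1-1)t}$ and integrating over $[0,T]$, together with the zero initial datum $\bar\vu(0)=\vo$ from \eqref{u diff eq}, forces $\inner{u_1(T)-u_2(T)}{\chi^u}=0$. Since $\vchi=(\chi^u,\chi^v)\in H^1(\Omega)^2$ is the arbitrary terminal datum in \eqref{adj eq phi}, and $H^1(\Omega)$ is dense in $L^2(\Omega)$, arbitrariness of $\chi^u$ yields $u_1(T)=u_2(T)$ a.e.; replaying the argument with terminal time $t<T$ gives $u_1\equiv u_2$, and symmetrically $v_1\equiv v_2$, so $\bar\vu=\vo$.

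The main obstacle is not the final algebra but the legitimacy of inserting the adjoint solution $\vphi$ as a test function and of the cancellation itself. One must check that $\vphi$ from Proposition \ref{thm: phi exist} genuinely satisfies the admissibility conditions \eqref{phi}, that the pairings in \eqref{skt inner} are all finite under the regularity \eqref{uniqueness cond} on $\bar\vu$ and the $H^2$-in-space, $L^{4/3}$-time bounds on $\vphi$ — this is where the dimension restriction $d\le4$ enters, through the $H^1\hookrightarrow L^4$ Sobolev embedding that controls products like $\vP(\tilde\vu)^T\Delta\vphi$ tested against $\bar\vu$ — and that the integration by parts transferring $\vP,\vQ$ to their transposes is justified with the stated Neumann or Dirichlet boundary conditions so that no boundary terms survive. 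Establishing that these dualities are well defined, rather than the formal manipulation, is the delicate part of the argument.
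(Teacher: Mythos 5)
Your proposal is correct and follows essentially the same route as the paper: the reduction of \eqref{u diff eq} to \eqref{skt inner} via Lemma \ref{lem: u diff}, the insertion of the backward adjoint solution from Proposition \ref{thm: phi exist} as the test function so that everything collapses to \eqref{eq u bar phi}, and the integrating-factor argument with arbitrary terminal data $\vchi\in H^1(\Omega)^2$ (and terminal times $t\le T$) to conclude $\bar\vu=\vo$. Your closing remarks on where the admissibility of $\vphi$, the $H^1\hookrightarrow L^4$ embedding for $d\le 4$, and the boundary conditions must be checked correspond exactly to what the paper handles in Section \ref{sec:adj}.
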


\section{Global well-posedness for the SKT system\label{sec:global}} \lettrine{\color{RoyalBlue}I}{n} this section, we assume that $d\le 4$ and show that our uniqueness result in Section \ref{sec:uniq} yields the global well-posedness for solutions of the SKT system \eqref{SKT vector} with the following initial datum conditions
\begin{equation}
  \label{u0 cond}
  \vu_0 \in L^2(\Omega)^2 \text{ and } \grad \vp(\vu_0) \in L^2(\Omega)^4.
\end{equation}
\begin{Rmk}
  Thanks to the existence result in our prior work \cite{PT16}, whose main result is stated as Theorem \ref{thm: existence},  we see that for all $T>0$, under the assumptions that the space dimension $d\le 4$ and the initial data satisfies \eqref{u0 cond}, the SKT system  \eqref{SKT vector} possesses solutions $\vu \in L^\infty(0,T;H^1(\Omega)^2)$ with $\partial_t\vu \in L^\f43(\Omega_T)^\f43$ as consequences of \eqref{3.38c} and \eqref{3.36}. Theorem \ref{thm:uniq} thus applies and gives the uniqueness of such a solution $\vu$ of \eqref{SKT vector}.   We then conclude that the solution $\vu$ exists globally and uniquely.
\end{Rmk}
Our main result in this section is as follows:
\begin{Thm}Suppose that $d\le 4$, that $\vu_0$ satisfies \eqref{u0 cond}, and that the coefficients satisfy \eqref{coef cond}. The system \eqref{SKT vector} possesses a unique global solution $\vu \in L^\infty(0,\infty;H^1(\Omega)^2) $ with $\partial_t \vu \in L^2(\Omega\times(0,\infty))^2$.
Furthermore, the mapping $\vu_0 \mapsto \vu$ is continuous from $L^q(\Omega)$ into $L^2(\Omega)$ endowed with the norm $\abs{\star}_w$
\[\abs{\star}_w = \sup_{v\in H^1} \f{\inner{\star}{v}}{\norm{v}}.\]
Here $q=\max(2d/(6-d),4d/(d+2))$.
\end{Thm}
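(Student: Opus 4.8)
The plan is to assemble the final theorem from three ingredients that are already in place: the existence result of \cite{PT16} (quoted as Theorem \ref{thm: existence}), the newly proved uniqueness result Theorem \ref{thm:uniq}, and a continuous-dependence estimate that remains to be established. The global existence and uniqueness are immediate: under the hypotheses $d\le 4$, $\vu_0$ satisfying \eqref{u0 cond}, and \eqref{coef cond}, the prior work produces for every $T>0$ a solution $\vu \in L^\infty(0,T;H^1(\Omega)^2)$ with $\partial_t\vu \in L^2(\Omega_T)^2$ (hence in particular in $L^{4/3}(\Omega_T)^2$), and Theorem \ref{thm:uniq} shows any such solution is unique. Since the bounds are uniform on $[0,T]$ for arbitrary $T$ and the solution is non-negative by the weak maximum principle of \cite{PT16}, one glues these together to obtain a single global solution in $L^\infty(0,\infty;H^1(\Omega)^2)$ with $\partial_t\vu\in L^2(\Omega\times(0,\infty))^2$. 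I would state this gluing carefully but it is routine once uniqueness guarantees the local pieces agree on overlaps.

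The substantive new work is the continuity of the solution map $\vu_0 \mapsto \vu$ from $L^q(\Omega)$ into $(L^2(\Omega),\abs{\cdot}_w)$, and I would prove it by recycling the adjoint-problem machinery of Section \ref{sec:uniq}. First I would take two initial data $\vu_0^{(1)},\vu_0^{(2)}$, let $\vu_1,\vu_2$ be the corresponding global solutions, and set $\bar{\vu}=\vu_1-\vu_2$. Exactly as in the derivation of \eqref{skt inner}, $\bar{\vu}$ satisfies the same weak identity, but now with a nonzero initial contribution: pairing against a solution $\vphi$ of the adjoint problem \eqref{adj eq phi} and integrating the $\inner{\bar{\vu}}{\vphi}_t$ term over $[0,T]$ now produces the boundary term $\inner{\bar{\vu}(0)}{\vphi(0)}=\inner{\vu_0^{(1)}-\vu_0^{(2)}}{\vphi(0)}$ instead of zero. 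Reducing to the first component and multiplying by $e^{-(a_1-1)t}$ as before, I would arrive at an identity of the schematic form
\[
\inner{u_1(T)-u_2(T)}{\chi^u} = e^{(a_1-1)T}\inner{u_0^{(1),u}-u_0^{(2),u}}{\varphi^u(0)},
\]
and similarly for the $v$-component, where $\vphi$ solves \eqref{adj eq phi} with terminal data $\vchi$.

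To turn this into the claimed continuity I would test against arbitrary $\vchi=\vv\in H^1(\Omega)^2$ with $\norm{\vv}\le 1$ and estimate the right-hand side. By the a priori bound \eqref{phi bound} from Lemma \ref{lem: phi bound}, $\vphi(0)$ is controlled in $H^1$ by $\norm{\vchi}_{H^1}$; but to extract the precise exponent $q=\max(2d/(6-d),4d/(d+2))$ I expect one needs a sharper bound on $\varphi^u(0)$ in the dual space $L^{q'}$, obtained via the Sobolev embedding $H^1(\Omega)\hookrightarrow L^{q'}(\Omega)$ together with the time-regularity $\partial_t\vphi\in L^{4/3}(\Omega_T)$ controlling $\vphi(0)$ in terms of $\vphi(T)=\vchi$. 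Then $\abs{\inner{\vu_0^{(1)}-\vu_0^{(2)}}{\vphi(0)}}\le \norm{\vu_0^{(1)}-\vu_0^{(2)}}_{L^q}\,\norm{\vphi(0)}_{L^{q'}}\le C\,\norm{\vu_0^{(1)}-\vu_0^{(2)}}_{L^q}$, and taking the supremum over $\norm{\vv}\le 1$ yields $\abs{\vu_1(T)-\vu_2(T)}_w \le C(T)\,\norm{\vu_0^{(1)}-\vu_0^{(2)}}_{L^q}$, which is exactly continuity in the weak norm.

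The main obstacle is the bookkeeping in the last step: matching the Lebesgue exponents so that the duality pairing with $\vphi(0)$ closes and produces precisely $q=\max(2d/(6-d),4d/(d+2))$. This requires tracking how the adjoint solution's regularity at the terminal time $t=T$ (where $\vphi(T)=\vchi\in H^1$) propagates backward to $t=0$ and determines the integrability of $\vphi(0)$, and then identifying which of the two competing exponents dominates in each dimension $d\le 4$. The two expressions for $q$ presumably arise from two distinct embedding constraints—one from the $H^1\hookrightarrow L^{q'}$ trace of the adjoint data and one from the $L^{4/3}$ time-derivative bound—so I would verify both estimates separately and take the larger $q$ (smaller $q'$) to guarantee both hold simultaneously. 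Everything else reduces to the estimates already proved in Lemma \ref{lem: phi bound} and Proposition \ref{thm: phi exist}.
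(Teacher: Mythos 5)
Your first paragraph (existence, uniqueness, and gluing into a global solution) is exactly the paper's argument and is fine; the real content of this theorem is the continuous-dependence estimate, and there your plan has a step that fails. You propose to reuse the adjoint problem \eqref{adj eq phi} of the uniqueness proof (right-hand side $\vphi$) and then ``reduce to the first component and multiply by $e^{-(a_1-1)t}$''. But the identity you start from, \eqref{eq u bar phi}, is a scalar identity in which the two components are already summed:
\[
\f{d}{dt}\Big[\inner{\bar u}{\varphi^u}+\inner{\bar v}{\varphi^v}\Big]=(a_1-1)\inner{\bar u}{\varphi^u}+(a_2-1)\inner{\bar v}{\varphi^v},
\]
and since the adjoint system is coupled through $\vP(\tilde\vu)^T$ and $\vQ(\tilde\vu)^T$, choosing $\vchi=(\chi^u,0)$ does not make $\varphi^v$ vanish (the off-diagonal entry $a_{12}\tilde u$ forces $\varphi^v\not\equiv 0$); there is no ``first component'' of this identity to isolate, so the integrating-factor step breaks down whenever $a_1\ne a_2$. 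This is precisely why the paper, in this section, modifies the adjoint problem: in \eqref{adj eq phi2} the right-hand side is $\vl(\vphi)$ rather than $\vphi$, so that, $\vl$ being diagonal, $\inner{\vl(\bar\vu)}{\vphi}=\inner{\bar\vu}{\vl(\vphi)}$ and the reaction terms cancel identically, leaving $\inner{\bar\vu}{\vphi}_t=0$, i.e.\ $\inner{\bar\vu(\tau)}{\vchi}=\inner{\bar\vu(0)}{\vphi(0)}$, with no component splitting and no integrating factor. You should adopt this modified adjoint problem; its existence and a priori bounds follow exactly as in Section \ref{sec:adj}.

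Second, the step that actually produces $q=\max(2d/(6-d),4d/(d+2))$ is missing from your proposal, and your guess about its origin is not what the paper does. The paper never estimates $\vphi(0)$ in $L^{q'}$ by a Sobolev/trace argument; it substitutes the adjoint equation itself, writing $\vphi(0)=\vchi+\int_0^\tau\big[\vP(\tilde\vu)^T\Delta\vphi-\vQ(\tilde\vu)\vphi+\vl(\vphi)\big]dt$ as in \eqref{4.5}, and then applies H\"older term by term: in the diffusion term only the weighted quantity $(1+\tilde u+\tilde v)^{1/2}\Delta\vphi$ is controlled in $L^2$ (estimate \eqref{phi boundb}), so the pairing $\inner{\bar u(0)}{\tilde u\Delta\phi^u}$ must be split as $\bar u(0)\cdot\tilde u^{1/2}\cdot\big(\tilde u^{1/2}\Delta\phi^u\big)$ with exponents $(4d/(d+2),\,4d/(d-2),\,2)$, forcing $\bar\vu(0)\in L^{4d/(d+2)}$, while the $\vQ$-term forces $\bar\vu(0)\in L^{2d/(6-d)}$; the larger of the two exponents is $q$. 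That said, your alternative idea --- pairing $\bar\vu(0)$ directly against $\vphi(0)$, which by \eqref{phi bounda} and $H^1\hookrightarrow L^{2d/(d-2)}$ is controlled in $L^{2d/(d-2)}$ by $\norm{\vchi}_{H^1}$ --- is coherent once the adjoint problem is corrected as above, and it would give continuity in the $L^{2d/(d+2)}$-norm; since $2d/(d+2)<q$ and $\Omega$ is bounded, that is in fact a stronger statement which implies the theorem. So your route can be repaired and would even simplify the exponent bookkeeping, but as written the decisive estimate is only announced (with an incorrect prediction of a trace/time-derivative dichotomy), not proved, and the identity it rests on is invalid for $a_1\ne a_2$.
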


To show the continuous dependance on the initial data, we suppose that $\vu_1$ and $\vu_2$ are two solutions with initial data $\vu_1(0),\vu_2(0) $ satisfying \eqref{u0 cond}. We proceed as in Section \ref{sec:uniq} by denoting $\bar\vu = \vu_1 -\vu_2$, $\tilde \vu = (\vu_1+\vu_2)/2$, and recall from \eqref{skt inner} that
\begin{equation}\label{skt inner2}
  \inner{\bar{\vu}}{\vphi}_t-\inner{\bar{\vu}}{\vphi_t}-\inner{\bar{\vu}}{\vP(\tilde{\vu})^T\Delta\vphi} +\inner{\bar{\vu}}{\vQ(\tilde{\vu})^T\vphi} = \inner{\vl(\bar{\vu})}{\vphi},
\end{equation}
where $\vphi $ solves the following adjoint problem
\begin{equation}
  \label{adj eq phi2}
  \begin{cases}
    &-\partial_t\vphi - \vP(\tilde{\vu})^T  \Delta\vphi+ \vQ(\tilde{\vu})^T\vphi = \vl(\vphi) \text{ in }\Omega_\tau =\Omega \times (0,\tau),
    \\& \partial_\nu \vphi= \vo \text{ or } \vphi = \vo  \text{ on }\partial \Omega\times(0,\tau),
    \\& \vphi(\tau) =\vchi\text{ in }\Omega,
  \end{cases}
\end{equation}
for $\vchi(\vx)= (\chi^u(\vx),\chi^v(\vx)) \in H^1(\Omega)^2$ (arbitrary) with appropriate compatible boundary conditions.

The existence of solution that satisfies the following a priori estimates was proven in Lemma  \ref{lem: phi bound}  in Section \ref{sec:adj}:

\begin{Lem}[A priori estimates]\label{lem: phi bound2} Assume that $d\le4$ and $\tilde\vu  =(\tilde u,\tilde v)\in L^\infty (0,T;H^1(\Omega)^2)$.
  We then have the following a priori bounds independent of $\tau\in [0,T]$ for the solutions $\vphi$ of \eqref{adj eq phi2}:
\begin{subequations}
  \begin{align}
      & \label{phi bounda} \sup_{t\in [0,\tau]} \norm{\vphi}_{H^1(\Omega)^2} \le \kappa \norm{\vchi}_{H^1(\Omega)^2}, \\
      & \int_0^\tau (1+\tilde u+\tilde v) \abs{\Delta \vphi }^2 dt \le \kappa \norm{\vchi}_{H^1(\Omega)^2}, \label{phi boundb}
    \end{align}
\end{subequations}
Here, $\kappa$  depends on $\norm{\tilde\vu}_{ L^\infty (0,T;H^1(\Omega)^2)},T$ and on the coefficients but is independent of $\tau$.
\end{Lem}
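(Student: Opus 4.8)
The plan is to repeat, almost verbatim, the proof of Lemma \ref{lem: phi bound}, the only structural novelty being that the right-hand side of the adjoint equation \eqref{adj eq phi2} is now the reaction term $\vl(\vphi) = (a_1\phi^u, a_2\phi^v)$ rather than $\vphi$ itself. Since $\vl$ is linear in $\vphi$ with bounded nonnegative coefficients, it remains a lower-order term and can be absorbed exactly as the $\abs{\vphi_\ep}^2 + \abs{\grad\vphi_\ep}^2$ terms were in Lemma \ref{lem: phi bound}. First I would regularize as before: replace $\tilde\vu$ by the truncations $\tilde\vu_\ep = \vtheta_\ep(\tilde\vu) \in L^\infty(\Omega_T)^2$ from \eqref{u ep}, so that $\vP(\tilde\vu_\ep)$ is bounded and the approximate adjoint system (the analogue of \eqref{adj eq ep} with right-hand side $\vl(\vphi_\ep)$) becomes uniformly parabolic. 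Classical theory then yields a smooth solution $\vphi_\ep$, and it suffices to bound $\vphi_\ep$ uniformly in $\ep$ (and in $\tau$) and pass to the limit.

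Next I would derive the two energy identities. Multiplying the approximate equation by $\vphi_\ep$ and by $-\Delta\vphi_\ep$ and integrating (by parts in the second case) produces the analogues of \eqref{phi 1} and \eqref{phi 2}; adding and regrouping gives an identity of the same shape as \eqref{phi 3}, except that the right-hand side now carries $\inner{\vl(\vphi_\ep)}{\vphi_\ep}$ and $-\inner{\vl(\vphi_\ep)}{\Delta\vphi_\ep}$ in place of $\abs{\vphi_\ep}^2$ and $\abs{\grad\vphi_\ep}^2$. The first is bounded by $\max(a_1,a_2)\abs{\vphi_\ep}^2$; the second, after integration by parts (the boundary terms vanishing under either boundary condition since $\vl$ has constant coefficients), is bounded by $\max(a_1,a_2)\abs{\grad\vphi_\ep}^2$. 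Thus both contributions are controlled by $\abs{\vphi_\ep}^2 + \abs{\grad\vphi_\ep}^2$ with a constant depending only on the coefficients, and the estimate proceeds as before.

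The technical heart of the argument, as in Lemma \ref{lem: phi bound}, is the bound on the cross term $\inner{(\vP(\tilde\vu_\ep) + \vQ(\tilde\vu_\ep)^T)\vphi_\ep}{\Delta\vphi_\ep}$. Here I would use H\"older's inequality with exponents $(4,4,2)$, the Sobolev embedding $H^1(\Omega)\hookrightarrow L^4(\Omega)$ — which is precisely where $d\le 4$ enters — the uniform bound \eqref{u ep bounded by u}, and Young's inequality to split off a term $\f{d_0}{2}\abs{\Delta\vphi_\ep}^2$. The crucial point, stemming from the fact that $\vP(\tilde\vu_\ep)$ is only degenerately parabolic, is that this absorbed term must be matched against the \emph{uniform} part $d_0\abs{\Delta\vphi_\ep}^2$ of the coercivity estimate \eqref{P positive definite}, and not against the density-dependent part $\alpha(\tilde u_\ep + \tilde v_\ep)\abs{\Delta\vphi_\ep}^2$. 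After absorption one is left with the differential inequality
\[
-\f{d}{dt}\left(\abs{\vphi_\ep}^2 + \abs{\grad\vphi_\ep}^2\right) + \alpha(1+\tilde u_\ep + \tilde v_\ep)\abs{\Delta\vphi_\ep}^2 \le \kappa\left(\abs{\vphi_\ep}^2 + \abs{\grad\vphi_\ep}^2\right),
\]
exactly as in \eqref{3.10b}. Since \eqref{adj eq phi2} is a backward problem with terminal datum $\vphi(\tau)=\vchi$, I would then multiply by a suitable exponential factor and integrate over $[t,\tau]$ for $t\in[0,\tau]$; the Gronwall factor is bounded by a quantity of the form $e^{\kappa\tau}\le e^{\kappa T}$, which is exactly what produces the dependence of $\kappa$ on $T$ while keeping the bound \emph{uniform} in $\tau\in[0,T]$. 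This yields \eqref{phi bounda} and \eqref{phi boundb} for $\vphi_\ep$, and passing to the limit $\ep\to 0$ as in the previous subsection transfers them to $\vphi$.

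I expect the main obstacle to be the same delicate point already present in Lemma \ref{lem: phi bound}: closing the estimate in the degenerate setting requires absorbing the $\Delta\vphi_\ep$ cross term strictly into the fixed $d_0$ portion of the coercivity bound, which is what forces the H\"older--Sobolev chain above and hence the dimensional restriction $d\le 4$. The genuinely new bookkeeping is merely verifying that the reaction right-hand side $\vl(\vphi)$ contributes only lower-order terms and tracking that the Gronwall constant depends on $T$ but not on $\tau$, so that the bounds hold uniformly over the family of terminal times used in the continuous-dependence argument of Section \ref{sec:global}.
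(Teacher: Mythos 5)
Your proposal is correct and takes essentially the same route as the paper: the paper proves Lemma \ref{lem: phi bound2} simply by invoking the proof of Lemma \ref{lem: phi bound}, and your adaptation of that proof---treating $\vl(\vphi)$ as a lower-order term controlled by $\abs{\vphi_\ep}^2+\abs{\grad\vphi_\ep}^2$, absorbing the cross term into the uniform $d_0$ portion of the coercivity via the $H^1\hookrightarrow L^4$ embedding for $d\le 4$, and integrating over $[t,\tau]$ so the Gronwall factor is bounded by $e^{\kappa T}$ uniformly in $\tau$---is precisely the intended argument. No gaps.
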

We now continue to show the continuous dependance of $\vu$ on the data. We find from equations \eqref{skt inner2} and \eqref{adj eq phi2} that
\begin{equation}\label{eq u bar phi2}
  \inner{\bar{\vu}}{\vphi}_t = 0.
\end{equation}
Thus
\[\inner{\bar\vu(\tau)}{\vchi} = \inner{\bar\vu(0)}{\vphi(0)},\]
where we have used $\vphi(\tau) =\vchi$.

We now use \eqref{adj eq phi2}$_1$ and find
\begin{align}\nonumber
  \inner{\bar\vu(\tau)}{\vchi} & = \inner{\bar\vu(0)}{ \vchi + \int_0^\tau \big[ \vP(\tilde\vu)^T \Delta \vphi -\vQ(\tilde\vu)\vphi + \vl(\vphi)\big]dt}
    \\& \label{4.5}= \inner{ \bar\vu(0)} {\vchi} + \inner {\bar\vu(0)} { \int_0^\tau \big[\vP(\tilde\vu)^T \Delta \vphi -\vQ(\tilde\vu)\vphi + \vl(\vphi)\big] dt}
\end{align}
We next bound the typical terms on the RHS of \eqref{4.5}:
\begin{enumerate}[\hoab\;]
  \item We first bound a typical term $ \inner{\bar u(0)}{\tilde u \Delta \phi^u}$ in $\inner{\bar\vu(0)}{\vP^T(\tilde \vu) \Delta \vphi}$ as follows:
\begin{multline*}
   \inner{\bar u(0)}{\tilde u \Delta \phi^u} =    \inner{\ubrace{\bar u(0)}{\in L^\f{4d}{d+2}}}{\ubrace{\tilde u^\f12}{\in L^\f{4d}{d-2}}  \ubrace{\tilde u^\f12  \Delta \phi^u} {\in L^2} } \le \abs{\bar u(0)}_{L^\f{4d}{d+2}} \abs{\tilde u}_{L^\f{2d}{d-2}}^\f12 \abs{\tilde u^\f12 \Delta \phi^u}_{L^2}
   \\ \le c \abs{\bar u(0)}_{L^\f{4d}{d+2}} \abs{\tilde u}_{L^\infty(0,T;H^1)}^\f12 \abs{\tilde u^\f12 \Delta \phi^u}_{L^2}.
\end{multline*}
Thus, by \eqref{phi boundb}, we find
\begin{multline}\label{4.7}
   \inner{\bar \vu(0)}{\int_0^\tau\tilde \vu \Delta \vphi \,dt} \le c \abs{\bar \vu(0)}_{L^\f{4d}{d+2}} \abs{\tilde \vu}_{L^\infty(0,T;H^1)}^\f12 \int_0^\tau \abs{\tilde \vu^\f12 \Delta \vphi}_{L^2}
   \\ \le c T^\f12 \abs{\bar \vu(0)}_{L^\f{4d}{d+2}} \abs{\tilde \vu}_{L^\infty(0,T;H^1)}^\f12 \norm{\vchi}_{H^1}^\f12.
\end{multline}
  \item We now bound the term $\dps\inner{\bar \vu(0)}{\int_0^\tau \vQ(\tilde \vu) \vphi \,dt}$ by bounding its typical term $\dps \int_0^\tau\inner{\bar u(0)} {\tilde u^2 \phi^u}dt $:

\begin{multline*}
\inner{\ubrace{\bar u(0)}{\in L^\f{2d}{6-d} } } {\ubrace{\tilde u^2}{L^\f{d}{d-2}} \ubrace{\phi^u}{\in L^\f{2d}{d-2}}} \le \abs{\bar u(0)}_{L^\f{2d}{6-d}} \abs{\tilde u}_{L^\f{2d}{d-2}}^2 \abs{\phi^u}_{L^\f{2d}{d-2}} \\\le c \abs{\bar u(0)}_{L^\f{2d}{6-d}} \norm{\tilde u}_{L^\infty(0,T;H^1)}^2 \norm{\phi^u}_{L^\infty(0,T;H^1)}.
\end{multline*}
Therefore, by \eqref{phi bounda}, we find
\begin{equation}\label{4.8}
\inner{\bar \vu(0)}{\int_0^\tau \vQ(\tilde \vu) \vphi \,dt} \le cT \abs{\bar \vu(0)}_{L^\f{2d}{6-d}} \norm{\tilde \vu}_{L^\infty(0,T;H^1)}^2 \norm{\vchi}_{H^1}.
\end{equation}
  \item We finally bound
  \begin{equation} \label{4.9}
    \inner {\bar \vu(0)} {\int_0^\tau \vl(\vphi)dt} \le c \abs{\bar \vu(0)} \int_0^ \tau \abs{\vphi}dt \le c T^\f12 \abs{\bar \vu(0)} \norm{\vchi}^\f12.
  \end{equation}
\end{enumerate}

We therefore infer from \eqref{4.5}--\eqref{4.9} that
\[\inner{\bar\vu(\tau)}{\vchi} \le \inner{\bar{\vu}(0)}{\vchi} + \kappa\left( \left[\abs{\tilde \vu}_{L^\infty(0,T;H^1)}^\f12 +1\right]\norm{\vchi}_{H^1}^\f12+ \norm{\tilde \vu}_{L^\infty(0,T;H^1)}^2 \norm{\vchi}_{H^1} \right) \abs{\bar{\vu}(0)}_{L^q(\Omega)^2} , \]
where $\kappa = \kappa(T)$ is independent of $\tau$ and $q=\max(2d/(6-d),4d/(d+2))$.

By taking the supremum over $\vchi$ with $\norm{\vchi}= \norm{\chi}_{H^1(\Omega)^2} \le 1$, we conclude that
\begin{equation}
  \label{initial dep}
   \sup_{\vchi \in H^1: \norm{\vchi} \le 1}\inner{\vu_1(\tau) -\vu_2(\tau)}{\vchi} \le \abs{\vu_1(0) -\vu_2(0)} +\kappa \abs{\vu_1(0) -\vu_2(0)}_{L^q(\Omega)^2},
\end{equation}
where $\kappa =\kappa (T,\norm{\vu_1+ \vu_2}_{L^\infty(0,T;H^1)})$ .
\appendix
 \section*{Appendices}

\section{A technical lemma}
 \begin{Lem} \label{lem: u diff} Suppose that $\vp,\vq$ are as in \eqref{pi qi li}  and  $\vP,\vQ$ are as in \eqref{P Q jac}.
 We then have
 \begin{equation}
   \label{p diff}
  \vp(\vu_1) - \vp(\vu_2) = \vP(\tilde{\vu})\bar{\vu},
 \end{equation}
  and
 \begin{equation}
   \label{q diff}
  \vq(\vu_1) - \vq(\vu_2) = \vQ(\tilde{\vu})\bar{\vu},
 \end{equation}
 where $\tilde{\vu}={(\vu_1+\vu_2)}/{2}$  and $\bar{\vu}=\vu_1-\vu_2$.
 \end{Lem}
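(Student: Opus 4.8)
The plan is to exploit the fact that every component of $\vp$ and $\vq$ is a polynomial of degree at most two in $(u,v)$, so that each has a \emph{constant} Hessian. For any such function $f$ the midpoint quadrature rule is exact, i.e.
\[
 f(\vu_1)-f(\vu_2) = Df(\tilde{\vu})\,\bar{\vu},\qquad \tilde{\vu}=\tfrac12(\vu_1+\vu_2),\ \ \bar{\vu}=\vu_1-\vu_2.
\]
Reading this identity componentwise with $f=p_i$ and then $f=q_i$, and recalling from \eqref{P Q jac} that $\vP=\f{D\mc P}{D\vu}$ and $\vQ=\f{D\mc Q}{D\vu}$ are exactly the Jacobians of $\mc P$ and $\mc Q$, would give \eqref{p diff} and \eqref{q diff} simultaneously.

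To justify the exact midpoint identity I would set $\vu_1=\tilde{\vu}+\tfrac12\bar{\vu}$ and $\vu_2=\tilde{\vu}-\tfrac12\bar{\vu}$ and Taylor-expand each scalar component $f$ about $\tilde{\vu}$. Because the expansion terminates at second order (the Hessian $H_f$ is constant), it is exact:
\[
 f\big(\tilde{\vu}\pm\tfrac12\bar{\vu}\big)= f(\tilde{\vu}) \pm \tfrac12\,Df(\tilde{\vu})\bar{\vu} + \tfrac18\,\bar{\vu}^{T}H_f\,\bar{\vu}.
\]
The quadratic remainder is identical for $\vu_1$ and $\vu_2$, since it depends only on $\tfrac12\bar{\vu}$ up to a sign that squares away, so it cancels in the difference; the two linear terms add up to $Df(\tilde{\vu})\bar{\vu}$. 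That is the entire argument.

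As a concrete verification one may instead expand the monomials directly, using $u_1^2-u_2^2=2\tilde u\bar u$, $v_1^2-v_2^2=2\tilde v\bar v$, and the only genuinely bilinear term $u_1v_1-u_2v_2=\tilde u\bar v+\tilde v\bar u$. Substituting these into $p_1(\vu_1)-p_1(\vu_2)=d_1\bar u + a_{11}(u_1^2-u_2^2)+a_{12}(u_1v_1-u_2v_2)$ and its three analogues reproduces exactly the rows of $\vP(\tilde{\vu})$ and $\vQ(\tilde{\vu})$ acting on $\bar{\vu}$. I expect no real obstacle here: the difficulty is bookkeeping rather than analysis. The one point deserving care is the cross term $uv$, handled by the bilinear identity above, together with checking that the diagonal Jacobian entries correctly carry the factor $2$ arising from differentiating the pure quadratic monomials $a_{11}u^2$, $a_{22}v^2$, $b_1u^2$, $c_2v^2$.
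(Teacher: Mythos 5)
Your proof is correct, and it rests on exactly the same structural fact as the paper's---every component of $\vp$ and $\vq$ is a polynomial of degree at most two---but the mechanism you use to extract the midpoint identity is different. The paper proceeds by the fundamental theorem of calculus along the segment joining $\vu_2$ to $\vu_1$, writing
\begin{equation*}
\vp(\vu_1)-\vp(\vu_2)=\int_0^1 \f{d}{dt}\,\vp(\vu_2+t\bar{\vu})\,dt=\left(\int_0^1 \vP(\vu_2+t\bar{\vu})\,dt\right)\bar{\vu},
\end{equation*}
and then integrates the matrix explicitly: since each entry of $\vP$ is affine in its argument, the integrand is affine in $t$ and the integral equals the entry evaluated at $t=\tfrac12$, i.e.\ at $\tilde{\vu}$. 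Your argument avoids integration altogether: you expand each component exactly to second order about the midpoint $\tilde{\vu}$ and observe that the constant-Hessian quadratic remainders at $\tilde{\vu}\pm\tfrac12\bar{\vu}$ are identical, hence cancel in the difference, leaving $Df(\tilde{\vu})\bar{\vu}$. These are two justifications of the same exactness phenomenon (a one-point midpoint rule is exact when the derivative along the segment is affine), and your backup computation with the monomial identities $u_1^2-u_2^2=2\tilde u\bar u$ and $u_1v_1-u_2v_2=\tilde u\bar v+\tilde v\bar u$ is precisely what the paper's row-by-row integration produces. What the paper's form buys is immediate generality: for non-quadratic nonlinearities the same computation yields $\int_0^1 \vP(\vu_2+t\bar{\vu})\,dt$ in place of $\vP(\tilde{\vu})$, which is the version one would need if the model were perturbed. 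What your form buys is transparency and economy: no parametrized integral is needed, and the role of the symmetric choice of base point $\tilde{\vu}$ (as opposed to, say, $\vu_2$) is made completely explicit by the cancellation of the two equal quadratic terms.
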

 \begin{proof}
   We write
   \begin{align*}
     &\vp(\vu_1) -\vp(\vu_2)  = \vp(\vu_2 + \bar\vu) -\vp(\vu_2) = \int_0^1 \f{d}{dt} \vp (\vu_2 + t\bar{\vu})\;dt
      =\int_0^1 \f{D \mc{P}}{D\vu}(\vu_2+t\bar{\vu}) \cdot \bar{\vu}\;dt
     \\& = \int_0^1 \begin{pmatrix}
       d_1 +2a_{11}(u_2+t\bar{u}) + a_{12}(v_2+t\bar{v}) & a_{12}(u_2+t\bar{u})
       \\ a_{21}(v_2+t\bar{v}) & d_2+a_{21}(u_2+t\bar{u})+2a_{22}(v_2+t\bar{v})
   \end{pmatrix}
   \cdot \bar{\vu} \;dt
   \\& = \begin{pmatrix}
     d_1 +a_{11}(u_1+u_2) + a_{12}{(v_1+v_2)}/{2} & a_{12}{(u_1+u_2)}/{2}
     \\ a_{21}{(v_1+v_2)}/{2} & d_2+a_{21}{(u_1+u_2)}/{2}+a_{22}(v_1+v_2)
 \end{pmatrix} \cdot \bar{\vu}
 \\& = \vP(\tilde\vu)\cdot \bar{\vu}.
   \end{align*}

   We thus proved \eqref{p diff} and  we can derive \eqref{q diff} in the same fashion.
 \end{proof}
\section{Existence result for SKT systems}

 In \cite[Theorem 3.1]{PT16}, we proved the following existence result for SKT system \eqref{SKT vector}:
 \begin{Thm}[Existence of solutions for the SKT]\label{thm: existence}{\color{white}s}
   \begin{enumerate}[ i) ]
     \item We assume that that $d\le 4$, that the condition \eqref{coef cond} hold, and that $\vu_0$ is given, $\vu_0\in L^2(\Omega)^2,\vu_0\ge 0$. Then equation \eqref{SKT vector} possesses a solution $\vu\ge \vo$ such that, for every $T>0$:
     \begin{subequations}
       \label{3.38}
\begin{align}
\label{3.36a}  &       \vu\in L^\infty(0,T;L^2(\Omega)) \cap L^2(0,T;H^1(\Omega)^2)\\
  &(\sqrt{u} +\sqrt{v})(\abs{\grad u}+ \abs{\grad v}) \in L^2(0,T;L^2(\Omega))\\
  \label{3.38c}&\vu \in L^4(0,T;L^4(\Omega)).
\end{align}
     \end{subequations}
     with the norms in these spaces bounded by a constant depending  on $T$, on the coefficients, and on the norms in $L^2(\Omega)$ of $u_0$ and $v_0$.
     \item If, in addition, $\grad \vp(\vu_0)\in L^2(\Omega)^4$, then the solution $\vu$ also satisfies
\begin{subequations}
  \begin{align}
    \label{3.36}
    &\grad \vp(\vu) \in L^\infty(0,T;L^2(\Omega)^4), \quad (1+\abs{u}+\abs{v})^\f12 \left(\abs{\partial_t u}+\abs{\partial_t v}\right) \in L^2(0,T;L^2(\Omega)),
  \\ &   \label{3.37}\Delta \vp(\vu) \in L^2(0,T;L^2(\Omega)^2),
\end{align}
\end{subequations}
      with the norms in these spaces bounded by a constant depending on the norms of $\vu_0$ and $\grad \vp(\vu_0)$ in $L^2$ (and on $T$ and the coefficients).
   \end{enumerate}
 \end{Thm}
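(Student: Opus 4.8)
The target statement is an existence result, so the plan is purely constructive, following the finite-difference scheme of \cite{PT16}: build a family of approximate solutions $\vu_\ep$ for which classical parabolic theory applies, establish the bounds of part~i) and part~ii) \emph{uniformly} in $\ep$ together with the non-negativity $\vu_\ep\ge\vo$, and pass to the limit. The non-negativity is not cosmetic but structural: the coercivity \eqref{P positive definite} holds only on the cone $u,v\ge0$, so every estimate below presupposes $\vu\ge\vo$. I would obtain it at the approximate level through a weak maximum principle that keeps $\vu_\ep$ in the positive cone---testing the $u$- and $v$-equations against the negative parts $u^-,v^-$ and using that each of $p_i,q_i,\ell_i$ carries a factor $u$ or $v$, so the faces $\{u=0\},\{v=0\}$ are invariant. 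This is the genuinely delicate point, because the cross term $a_{12}\Delta(uv)$ couples the two components and obstructs a naive componentwise comparison; it is exactly the maximum principle of \cite{PT16}.

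For part~i) the basic energy estimate comes from testing \eqref{SKE alternate vector} with $\vu$. Using \eqref{P grad u} and the coercivity \eqref{P positive definite},
\begin{equation*}
  \f12\f{d}{dt}\abs{\vu}^2 + d_0\abs{\grad\vu}^2 + \alpha\int_\Omega (u+v)\abs{\grad\vu}^2\,dx + \inner{\vq(\vu)}{\vu} \le \inner{\vl(\vu)}{\vu}.
\end{equation*}
Since $u,v\ge0$ forces $\inner{\vq(\vu)}{\vu}\ge0$ while $\inner{\vl(\vu)}{\vu}\le\max(a_1,a_2)\abs{\vu}^2$, Gronwall gives the $L^\infty(0,T;L^2)\cap L^2(0,T;H^1)$ bound and the weighted bound $\int_{\Omega_T}(u+v)\abs{\grad\vu}^2<\infty$, which is precisely $(\sqrt u+\sqrt v)(\abs{\grad u}+\abs{\grad v})\in L^2(\Omega_T)$. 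The $L^4(\Omega_T)$ bound then follows by parabolic interpolation: the weighted bound reads $\grad(u^{3/2}),\grad(v^{3/2})\in L^2(\Omega_T)$, and combining it with the $L^\infty(0,T;L^2)$ bound and the Sobolev embedding $H^1\hookrightarrow L^{2d/(d-2)}$---critical at $d=4$---yields $\vu\in L^4(\Omega_T)$ exactly in the range $d\le4$. All constants depend only on $T$, the coefficients and $\abs{\vu_0}_{L^2}$.

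For part~ii) I would run a higher-order estimate by testing \eqref{SKE alternate vector} with $\partial_t\vp(\vu)=\vP(\vu)\partial_t\vu$. The parabolic term becomes $\f12\f{d}{dt}\abs{\grad\vp(\vu)}^2$, so $\grad\vp(\vu)\in L^\infty(0,T;L^2)$ once the datum $\grad\vp(\vu_0)\in L^2$ is in force; the time term is $\inner{\vP(\vu)\partial_t\vu}{\partial_t\vu}\ge d_0\abs{\partial_t\vu}^2+\alpha\int_\Omega(u+v)\abs{\partial_t\vu}^2$, which is the bound $(1+\abs u+\abs v)^{1/2}(\abs{\partial_t u}+\abs{\partial_t v})\in L^2(\Omega_T)$; and the remaining terms $\inner{\vq(\vu)-\vl(\vu)}{\vP(\vu)\partial_t\vu}$ are absorbed into these coercive quantities by Young's inequality together with the part~i) bounds (this is the routine, if slightly tedious, part). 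Finally $\Delta\vp(\vu)\in L^2(\Omega_T)$ is read off the equation itself, $\Delta\vp(\vu)=\partial_t\vu+\vq(\vu)-\vl(\vu)$, every term of which is now controlled in $L^2(\Omega_T)$.

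The closing step is the passage to the limit, and this is where the main obstacle lies. The uniform estimates give weak(-star) limits immediately, but the nonlinear, non-uniformly-parabolic flux $\grad\cdot(\vP(\vu_\ep)\grad\vu_\ep)$ cannot be handled by weak convergence alone. I would extract strong $L^2(\Omega_T)$ and a.e.\ convergence of $\vu_\ep$ by an Aubin--Lions argument, using the $L^2(0,T;H^1)$ bound together with the discrete analogue of the $\partial_t\vu$ bound from part~ii); strong convergence then passes to the limit in the continuous nonlinearities $\vp(\vu_\ep)\to\vp(\vu)$ and $\vq(\vu_\ep)\to\vq(\vu)$, and in $\vP(\vu_\ep)\grad\vu_\ep$ upon pairing the strong convergence of $\vP(\vu_\ep)$ with the weak convergence of $\grad\vu_\ep$. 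The recurring difficulty---and the reason the whole scheme is confined to $d\le4$---is that $\vP$ degenerates on $\{u+v=0\}$, so it is the weighted estimates rather than the bare $L^2(0,T;H^1)$ bound that must drive both the interpolation in part~i) and the compactness here.
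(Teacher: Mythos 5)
The first thing to say is that the paper contains no proof of this statement: Theorem \ref{thm: existence} is recalled in Appendix B verbatim from \cite[Theorem 3.1]{PT16}, and the present article only uses it as input for the uniqueness argument. So the only comparison possible is with the strategy the paper attributes to \cite{PT16} in the introduction --- finite difference approximations, a weak maximum principle yielding $\vu\ge\vo$ (avoiding the entropy/change-of-variables route of \cite{CJ04,Jun15}), a priori estimates, passage to the limit --- and your outline follows exactly that route. Your individual estimates are also the right ones: testing with $\vu$, using \eqref{P positive definite} and the sign of $\inner{\vq(\vu)}{\vu}$ on the positive cone, gives part i); reading the weighted bound as $\grad(u^{3/2}),\grad(v^{3/2})\in L^2(\Omega_T)$ and interpolating against $L^\infty(0,T;L^2)$ is indeed exactly critical at $d=4$ and yields \eqref{3.38c}; testing with $\partial_t\vp(\vu)=\vP(\vu)\partial_t\vu$ is the correct device for part ii), and $\Delta\vp(\vu)\in L^2(\Omega_T)$ then follows from the equation.

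Two points in your sketch would not survive as written. First, for the compactness in part i) you invoke ``the discrete analogue of the $\partial_t\vu$ bound from part ii)'', but that bound is available only under the additional hypothesis $\grad\vp(\vu_0)\in L^2(\Omega)^4$, which part i) does not assume; as stated this makes the existence of part i) conditional on part ii)'s data. The standard repair is to read the time derivative off the equation: since $\vu_\ep\in L^4(\Omega_T)$ uniformly, $\vp(\vu_\ep)$ and $\vq(\vu_\ep)$ are bounded in $L^2(\Omega_T)$, hence $\partial_t\vu_\ep=\Delta\vp(\vu_\ep)-\vq(\vu_\ep)+\vl(\vu_\ep)$ is bounded in $L^2(0,T;H^{-2}(\Omega)^2)$ (or its discrete analogue), which together with the $L^2(0,T;H^1)$ bound suffices for Aubin--Lions. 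Second, the absorption of $\inner{\vq(\vu)-\vl(\vu)}{\vP(\vu)\partial_t\vu}$ in part ii) is not as routine as you claim: after Young against $\int_\Omega(1+u+v)\abs{\partial_t\vu}^2$, the cubic product $\vq(\vu)\vP(\vu)$ leaves a term of order $\int_\Omega(u+v)^5$, which is \emph{not} controlled by the $L^4(\Omega_T)$ bound of part i). It closes only through a Gronwall bootstrap in the quantity $\abs{\grad\vp(\vu)}_{L^2}^2$ being estimated: since $u^2+v^2\lesssim p_1(\vu)+p_2(\vu)$ and $\vp(\vu)\in L^\infty(0,T;L^1)$, interpolation in $H^1$ gives, for $d\le4$,
\begin{equation*}
\int_\Omega (u+v)^5\,dx \;\lesssim\; 1+\abs{\grad\vp(\vu)}_{L^2}^2,
\end{equation*}
so the offending term is linear in the Gronwall quantity. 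This step deserves to be made explicit, since it is precisely where the restriction $d\le4$ enters part ii) as well.
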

 \section{Additional regularity of weak solutions\label{appen: C}}
Although this was not explicitly stated in \cite{PT16}, the solutions that we constructed in dimension $d\le 4$ belong to $L^\infty_t(H^1)$ with $\partial_t \vu$ in $L^2_t(L^2)$:
 \begin{enumerate}[\hoathib\;]
   \item From \eqref{3.36a} and \eqref{3.36}, we have $\vu,\grad \vp(\vu)\in L^\infty(0,T;L^2(\Omega)^2)$. To show that $\grad \vu \in L^\infty(0,T;L^2(\Omega)^2)$, we note that \eqref{P positive definite} implies that, for $u,v \ge 0$, $\vP(\vu)$ is invertible (as a $2\times 2$ matrix),
   and that, pointwise (i.e. for a.e. $x\in \Omega$),
   \begin{equation}\label{2.5b}
    \abs{\vP(\vu)^{-1}}_{\mathcal{L}(\mathbb{R}^2)} \le \f{1}{d_0+\alpha(u+v)}.
   \end{equation}
   We thus find $\grad \vu \in L^\infty(0,T;L^2(\Omega)^2)$ which says that $\vu\in L^\infty(0,T;H^1(\Omega)^2)$.
     \item From \eqref{3.36}, we have $\partial_t\vu \in L^2(\Omega_T)^2$.
 \end{enumerate}

 \paragraph{Acknowledgement.} This work was supported in part by NSF grant DMS151024 and by the Research Fund of Indiana University.
{\sffamily
\def\cfudot#1{\ifmmode\setbox7\hbox{$\accent"5E#1$}\else
  \setbox7\hbox{\accent"5E#1}\penalty 10000\relax\fi\raise 1\ht7
  \hbox{\raise.1ex\hbox to 1\wd7{\hss.\hss}}\penalty 10000 \hskip-1\wd7\penalty
  10000\box7} \def\cprime{$'$}

}

\end{document}